\numberwithin{equation}{section}
\newtheorem{theorem}{Theorem}[section]
\newtheorem{corollary}[theorem]{Corollary}
\newtheorem{lemma}[theorem]{Lemma}
\newtheorem{proposition}[theorem]{Proposition}
\newtheorem{remark}[theorem]{Remark}
\newenvironment{notationnf}{
  \refstepcounter{theorem}
  \noindent\textbf{Notation \thetheorem.}
}
\def\RR{{\mathbb{R}}}
\def\NN{{\mathbb{N}}}
\def\CC{{\mathbb{C}}}
\newcommand{\Hb}{{\mathbf H}}
\newcommand{\Bc}{{\mathcal B}}
\newcommand{\Cc}{{\mathcal C}}
\newcommand{\Gc}{{\mathcal G}}
\newcommand{\Hc}{{\mathcal H}}
\newcommand{\Pc}{{\mathcal P}}
\newcommand{\Sc}{{\mathcal S}}
\newcommand{\Xc}{{\mathcal X}}
\newcommand{\SSF}{\mathfrak S}
\DeclareMathOperator{\re}{{\rm Re}\,}
\newcommand{\Ker}{\hbox{{\rm Ker}}\,}
\newcommand{\Ran}{\hbox{{\rm Ran}}\,}
\newcommand{\Tr}{\operatorname{Tr\,}}
\newcommand{\Ltwo}{L^2(\RR^3)}
\newcommand{\Hone}{\Hb^1(\RR^3)}
\newcommand{\CN}{{\Cc}_N}
\newcommand{\GN}{{\Gc}_N}
\newcommand{\PN}{{\Pc}_N}
\newcommand{\BH}{\mathcal{B}(\Hb^{1})}
\newcommand{\BL}{\mathcal{B}(L^2)}
\newcommand{\h}{\mathcal{H}}
\newcommand{\B}{\mathcal{B}(\h)}
\newcommand{\UC}{\mathcal{U}}
\newcommand{\uF}{\mathfrak{u}}
\newcommand{\GH}{Gl(\Hb^{1})}
\renewcommand{\a}{\alpha}
\newcommand{\D}{\Delta}
\newcommand{\F}{\Phi}
\renewcommand{\P}{\Psi}
\newcommand{\f}{\phi}
\newcommand{\noi}{\noindent}
\newcommand{\PI}[2]{\left\langle #1 , #2 \right\rangle}
\newcommand{\PIL}[2]{\left\langle #1 , #2 \right\rangle_{L^2}}
\newcommand{\PIH}[2]{\left\langle #1 , #2 \right\rangle_{\Hb^{1}}}
\begin{document}

\title[Stiefel and Grassmann manifolds in Quantum Chemistry]{Stiefel and Grassmann manifolds \\ in Quantum Chemistry}
\author{Eduardo Chiumiento}
\address[E. Chiumiento]{Departamento de de Matematica, FCE-UNLP \\
                                       Calles 50 y 115 \\
                                       (1900) La Plata \\
                                       Argentina                                       
                                           and     
                                       Instituto Argentino de Matem\'atica \\ `Alberto P. Calder\'on' \\ CONICET \\ Saavedra 15 3er. piso \\
(1083) Buenos Aires \\ Argentina.}
                                       
\email{eduardo@mate.unlp.edu.ar}  
\author{Michael Melgaard} 
\address[M. Melgaard]{School of Mathematical  Sciences \\
                                     Dublin Institute of Technology       \\
                                     Dublin 8, Republic of Ireland} 
\email{mmelgaard@dit.ie}

\thanks{The first author is partially supported by Instituto Argentino de Matem\'atica and CONICET}
 \thanks{The second author is supported by a Stokes Award (Science Foundation Ireland), grant 07/SK/M1208.}

\keywords{Variational spaces in Hartree-Fock theory,  Banach-Lie group, homogeneous space, Finsler manifold}

\subjclass[2010]{Primary:  53Z05; Secondary: 81V55,22E65, 58B20}

%\date{January 19, 2011}

%\date{April 18, 2011}

%\date{}

%\author{Eduardo Chiumiento, Michael Melgaard}

\begin{abstract}
We establish geometric properties of Stiefel and Grassmann manifolds which arise in relation to Slater
type variational spaces in many-particle Hartree-Fock theory and beyond. In particular, we prove that
they are analytic homogeneous spaces and submanifolds of the space of bounded operators on the 
single-particle Hilbert space. As a by-product we obtain that they are complete Finsler manifolds. 
These geometric properties underpin state-of-the-art results on existence of solutions to Hartree-Fock 
type equations.
\end{abstract}

\maketitle

\section{Introduction}
\label{ecmm1:intro}
The \textit{Stiefel manifold in Quantum Chemistry} is defined by  
\begin{equation}
%\label{def CN}
\Cc_{N}:= \left\{ \, (\phi_1 , \ldots , \, \phi_N) \in (\Hb^{1}(\RR^{3}))^N \, :  \,   \langle \phi_i, \phi_j \rangle_{L^{2}(\RR^{3})} =\delta_{ij}, \, 1 \leq   i, j \leq N \, \right\},  
\label{ecmm1:intro-CN}
\end{equation}
where $N \in \NN$ (typically the number of electrons) is fixed and $\Hb^1=\Hb^{1}(\RR^{3})$ is the Sobolev space of order one (the single-particle Hilbert space).  Let $\UC(\CC^N)$ be the unitary group of $n \times n$ matrices. The \textit{Grassmann manifold in Quantum Chemistry}, denoted by $\GN$, is defined to be the quotient  of the Stiefel manifold under the equivalence relation  
\[  (\phi_1 , \ldots , \, \phi_N) \sim (\psi_1 , \ldots , \, \psi_N) \,\text{ if }\, \sum_{i=1}^N U_{ij}\, \phi_i =\psi_j, \, j=1, \ldots, N,\, \text{ for some  $U \in \UC(\CC^N)$. }  \]
Motivated by state-of-the-art existence results on Hartree-Fock type equations, based on abstract critical point 
theory, the aim of this paper is to establish the fundamental geometric properties and structures of manifolds of this type by means of operator theoretical methods. 

More precisely, it turns out that $\CN$ may be regarded as the subset of the algebra of bounded operators $\mathcal{B}(\Hb^{1})$ consisting of  partial isometries with respect to the  $L^2(\RR^3)$ inner product that have a fixed $N$-dimensional initial space.  On the other  hand, $\GN$ may be identified with the set of rank $N$ projections in $\mathcal{B}(\Hb^{1})$ which are orthogonal with respect to the $L^2(\RR^3)$ inner product.
Our results include:
\begin{itemize}
\item $\Cc_{N}$ and $\GN$ are analytic homogeneous spaces of a Banach-Lie group $\UC$ -- for $\UC$, see (\ref{Banach-Lie group UC}) ; 
\item $\Cc_{N}$ and $\GN$ are analytic  submanifolds of $\Bc(\Hb^{1})$ ; 
\item $\Cc_{N}$ and $\GN$ are complete Finsler manifolds. 
\end{itemize}
%\begin{eqnarray}
%\hspace{\ast}{0.0cm} & & \Cc_{N} \mbox{ and } \GN \mbox{ are  analytic homogeneous spaces of a Banach-Lie group $\UC$; for }\UC \mbox{ see (\ref{Banach-Lie group UC}) }  \label{ecmm1:intro-r1} \\ 
%\hspace{\ast}{0.0cm} & & \Cc_{N} \mbox{ and } \GN \mbox{ are  analytic  submanifolds of } \Bc(\Hb^{1}) \label{ecmm1:intro-r2} \\
%\hspace{\ast}{0.0cm} & & \Cc_{N} \mbox{ and } \GN  \mbox{ are  complete Finsler manifolds } 
%\hspace{\ast}{0.0cm} & & (\Cc_{N}, g) \mbox{ is an analytic Hilbert-Riemann manifold; for } g \mbox{ see (INSERT)}  \label{ecmm1:intro-r2} \\ 
%\hspace{\ast}{0.0cm} & & (\Cc_{N}, d_{g}) \mbox{ is a complete metric space } \label{ecmm1:intro-r3}
%\end{eqnarray}

In 1977 Lieb and Simon \cite{liebsimon77} proved existence of a ground state for the non-relativistic Hartree-Fock minimization problem by a variational approach. The set of admissible states in Hartree-Fock theory consists of the Slater determinants 
\begin{equation*}
\Sc_{N} = \left\{ \, \P_{\rm e} \in \Hc_{\rm e} \, : \, \exists \F = \{ \f_{n} \}_{1 \leq n \leq N}  \in \Cc_{N} \, , \P_{\rm e} = 
\frac{1}{\sqrt{N !} } {\rm det}\, \left( \f_{n}(x_{m}) \right) \right\}
\label{ecmm1:intro-slaterset}
\end{equation*}
where $\Cc_{N}$ is the Stiefel manifold in (\ref{ecmm1:intro-CN}) and $\Hc_{\rm e}= \bigwedge^{N} \Hb^{1}(\RR^{3};\CC^{2})$, 
i.e., the $N$-particle Hilbert space consisting of antisymmetric spinor-valued functions; $\Sc_{N}$ does not form a vector space.
The components of the minimizer satisfy the associated Hartree-Fock equations (i.e., the associated Euler-Lagrange equations).  
Prior to \cite{liebsimon77}, the Hartree-Fock equations were studied by more direct approaches, yielding less general results. 
The Hartree-Fock problem is hard because electrons may escape to infinity (ionization) which, mathematically, corresponds to a 
loss of compactness. Subsequently, Lions \cite{lions87} came up with a new approach which enabled him to prove existence of 
infinitely many solutions to the non-relativistic Hartree-Fock equations, including a minimizer. The afore-mentioned loss of 
compactness can be expressed by saying that the Hartree-Fock functional does not satisfy the Palais-Smale condition, as first 
noticed by Lions \cite{lions87}. He developed a new strategy based upon constructing ``approximate critical points" with some 
information on the Hessian at these points. Lions' method for recovering compactness from second order information was later 
pursued in its full generality by Fang and Ghoussoub \cite{fanggh94,gh93}, in particular leading to streamlined versions of Lions's work. 
The Lions-Fang-Ghoussoub approach have been implemented for various Hartree-Fock type variational problems. For the non-relativistic 
Hartree-Fock setting, it was applied by Fang and Ghoussoub in \cite{fanggh94}. For the quasi-relativistic Hartree-Fock problem, 
wherein one replaces the kinetic energy operator $-\D$ (the negative Laplacian) by its quasi-relativistic analogue, 
$\sqrt{ -\a^{-2} \D + \a^{-4}} -\a^{-2}$ ($\a$ being the fine-structure constant), it was implemented by Enstedt and Melgaard \cite{memm09}. 
In the presence of an external magnetic field, existence of infinitely many distinct solutions to the magnetic Hartree-Fock equations, 
including a minimizer associated with a ground state, was established by Enstedt and Melgaard \cite{memm10a} in a general 
framework which, in particular, includes the following three examples of external fields: a constant magnetic field, a decreasing 
magnetic field, and a ``physically measurable" magnetic field. Going beyond the standard Hartree-Fock problem, by replacing 
single Slater determinants by finite linear combinations of the former, Lewin implemented the approach in the non-relativistic 
multi-configurative case \cite{lewin04}; inspired by Esteban and S\'er\'e \cite{mjesere99} who provided the first rigorous results 
on the Dirac-Fock equations.

The abstract critical point theoretical result, which lies at the heart of the Fang-Ghoussoub approach to multiple solutions 
\cite[Theorem~1.7]{fanggh94}, requires that the underlying manifold, i.e. $\Cc_{N}$ in the non-relativistic setting, is a complete, $C^{2}$ Riemannian manifold. The streamlined approach to existence of a ground state applies the 
perturbed variational principle by Borwein-Preiss \cite[Theorem~2.6]{borpreiss87} which demands that $\Cc_{N}$ is a complete metric space. We shall verify these requirements but, in fact, we shall establish stronger results.

Despite the fundamental importance of these variational spaces, there seems to be few results on their geometry. On the level of theoretical physics, algebraic properties of variational spaces for electronic 
calculations are studied in \cite{cassam94} and, in particular, the finite-dimensional Grassmann manifold is 
discussed in \cite{roweryman80}. Other related papers, using techniques similar to the ones in the present work, 
lie in the area of differential geometry of operators, and have no direct relevance for Quantum Chemistry. For instance, see \cite{ACM05,chiumiento10} for  unitary orbits of partial isometries,  \cite{AL08,BTR07,corachpr97}  
for unitary orbits of projections  and the references given in each of these articles. For the various infinite dimensional manifolds found in the later works, ad hoc proofs are needed in each particular example. In our case, it is interesting to remark that we make use of standard facts of the theory of Hilbert spaces with two norms, which was independently developed by Krein \cite{krein47,krein98} and Lax \cite{lax54}.

This paper is organized as follows. In Section \ref{B-Lie group} we introduce  the  group $\UC$, which acts transitively on the Stiefel and Grassmann manifolds in Quantum Chemistry. Then  we show some basic facts on $\UC$, including  that $\UC$ is a Banach-Lie group endowed with the norm topology of $\BH$. Section \ref{Stiefel} contains the main results on the differential structure of the Stiefel manifold, namely that $\CN$ is a real analytic homogeneous space of $\UC$ and  a real analytic submanifold of $\BH$. 
In Section \ref{Grassmann} we prove the corresponding results on the differential structure of the Grassmann manifold, which now follows as a consequence of the  fact that $\GN$ is a quotient space of $\CN$. In Section \ref{Finsler-Riemann}, as an application of the preceding results,  we show that $\CN$ and $\GN$ are complete Finsler manifolds.

\section{A Banach-Lie group}
\label{B-Lie group}
Let $L^2=\Ltwo$ be the space of equivalence classes of complex-valued functions $\phi$ which are Lebesgue measurable and satisfy 
$\int_{\RR^3} | \phi |^2\, dx < \infty$. It is a complex and separable Hilbert space with the inner product $\PIL{\phi}{\psi}=\int_{\RR^3} \phi \, \overline{\psi} \, dx$. The induced norm is denoted by $\| \, \cdot \, \|_{L^2}$.  

Let $\Hb^{1}=\Hone$ be the Sobolev space given by
\[ \Hb^{1}=\{  \, \phi \in L^2  \, : \, \exists \, \partial_i \phi \in L^2, \, i=1, \ldots, 3  \, \}, \]
where  $\partial_i \phi$ denotes the weak partial derivative with respect to $x_i$.   It is a complex and separable Hilbert space endowed with the inner product $\PIH{\phi}{\psi}= \PIL{\phi}{\psi} + \sum_{i=1}^3 \PIL{\partial_i \phi}{ \partial_i \psi}$. The corresponding norm is denoted by $\| \, \cdot \, \|_{\Hb^{1}}$. 
One has that $\|\phi\|_{L^2}\leq \| \phi \|_{\Hb^{1}}$, $\phi \in \Hb^{1}$ and, moreover, $\Hb^1$ is dense in $L^2$.

% It follows that these  norms  are simply related by  $\|\phi\|_{L^2}\leq \| \phi \|_{\Hb^{1}}$, $\phi \in \Hb^{1}$. \textcolor{blue}{It is also a standard fact that $\Hb^1$ %is dense in $L^2$.}  

\medskip

\begin{notationnf}Let $\BH$ (resp. $\BL$) denote the algebra of bounded linear operators on $\Hb^{1}$ (resp. $L^2$). The symbol $\| \, \cdot \, \|$ denotes the usual operator norm on $\Hb^{1}$, meanwhile $\| \, \cdot \, \|_{\BL}$ denote the usual operator norm on $L^2$.  
\end{notationnf}

\begin{remark}
We need some basics facts on two well-known Banach-Lie groups, which are the groups of invertible  and unitary operators on a Hilbert space (see \cite{beltita06}, \cite{upmeier85}).  Actually, we will be concerned with only two special examples:
\begin{enumerate}
	\item The group $Gl(\Hb^{1})$ of invertible operators on $\BH$  is  a Banach-Lie group with the topology defined by the operator norm. Its Lie algebra is equal to $\BH$ endowed with the bracket $[X,Y]=XY- YX$. Moreover, the exponential map is the usual exponential of operators.
\item The unitary group $\UC(L^2)$ of the Hilbert space $L^2$. It is a real Banach-Lie group in the norm topology and its Lie algebra is given by the skew-hermitian operators on $L^2$.   Again the exponential map is the usual exponential of operators.
\end{enumerate}
\end{remark}

We would like to find a Banach-Lie group that acts transitively on the Stiefel and Grassmann manifolds in Quantum Chemistry. This job seems to be done by the following group
\begin{equation}\label{Banach-Lie group UC}
\UC := \{  \,  U \in \GH \, : \, \|U\phi\|_{L^2}=\|\phi\|_{L^2},\, \forall \, \phi \in \Hb^{1} \, \}.  
\end{equation}
%\begin{remark}\label{closed U}
%We claim that $\UC$ is a closed subgroup of $\GH$. In fact, it is clear that $\UC$ is a subgroup of $\GH$. Suppose that $(U_n)_n$ is a sequence in $\UC$ satisfying $\| U_n - U\| \rightarrow 0$, where $U \in \GH$. For any $\phi \in \Hb^{1}$, we see that 
%\[ \|(U_n - U) \phi \|_{L^2} \leq \|(U_n - U) \phi \|_{\Hb^{1}} \leq \| U_n - U \| \| \phi \|_{\Hb^{1}} \rightarrow 0.
%\] 
%Hence we obtain $\|U \phi \|_{L^2} =\lim \|U_n \phi\|_{L^2}= \| \phi \|_{L^2}$.  
%\end{remark}

\noi Since we could not find references to this group in the literature,  we shall prove some basic facts on its differential structure.  The next lemma provides different characterizations of $\UC$. 

\begin{lemma}\label{equivalence UC}
The following conditions are equivalent:
\begin{enumerate}
\item[i)] $U \in \UC$.
\item[ii)] $U \in \GH$ and $\PIL{U\phi}{U\psi}=\PIL{\phi}{\psi}$ for all $\phi, \psi \in \Hb^{1}$. 
\item[iii)] There exists $W \in \mathcal{U}(L^2)$ such that $W(\Hb^{1})=\Hb^{1}$ and $W|_{\Hb^{1}}=U$.
\item[iv)] $U \in \GH$ and $\PIL{U \phi}{\psi}=\PIL{\phi}{U^{-1}\psi}$ for all $\phi, \psi \in \Hb^{1}$.
\end{enumerate}
\end{lemma}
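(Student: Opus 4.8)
The plan is to prove the chain of implications (i)$\,\Rightarrow\,$(ii)$\,\Rightarrow\,$(iii)$\,\Rightarrow\,$(iv)$\,\Rightarrow\,$(i), which closes the loop. The implication (i)$\,\Rightarrow\,$(ii) is the polarization identity in the complex Hilbert space $L^2$: for $U\in\UC$ and $\phi,\psi\in\Hb^{1}$ one expands $\PIL{U\phi}{U\psi}=\tfrac14\sum_{k=0}^{3}i^{k}\|U\phi+i^{k}U\psi\|_{L^2}^{2}$, uses linearity of $U$ to pull $U$ out of each summand, and then invokes $\|U(\cdot)\|_{L^2}=\|\cdot\|_{L^2}$ on the four vectors $\phi+i^{k}\psi\in\Hb^{1}$ to arrive at $\PIL{\phi}{\psi}$. (The reverse implication (ii)$\,\Rightarrow\,$(i), obtained by setting $\psi=\phi$, is trivial, so (i) and (ii) are already equivalent; I retain the cyclic order only for bookkeeping.)

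For (ii)$\,\Rightarrow\,$(iii): the hypothesis says that $U$, viewed as a map from the $\|\cdot\|_{L^2}$-dense subspace $\Hb^{1}\subset L^2$ into $L^2$, is isometric for $\|\cdot\|_{L^2}$; being uniformly continuous on a dense subspace of the complete space $L^2$, it extends uniquely to an $L^2$-bounded operator $W$ on $L^2$, which is again an $L^2$-isometry. It remains to see that $W$ is \emph{onto}, i.e.\ $W\in\mathcal{U}(L^2)$. For this note that $U^{-1}\in\GH$ also preserves the $L^2$-norm on $\Hb^{1}$ — apply the inner-product identity of (ii) to the pair $U^{-1}\phi,\,U^{-1}\psi\in\Hb^{1}$ — so $U^{-1}$ likewise extends to an $L^2$-isometry $V$ on $L^2$; since $VW$ and $WV$ coincide with the identity on the dense subspace $\Hb^{1}$, they equal $I_{L^2}$, so $W$ is invertible and hence unitary, with $W^{-1}=V$. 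Finally $W(\Hb^{1})=U(\Hb^{1})=\Hb^{1}$ because $U$ is a bijection of $\Hb^{1}$, and $W|_{\Hb^{1}}=U$ by construction.

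For (iii)$\,\Rightarrow\,$(iv): first one must recover $U=W|_{\Hb^{1}}\in\GH$. The map $U$ is a linear bijection of $\Hb^{1}$ (injective because $W$ is injective on $L^2$, surjective because $W(\Hb^{1})=\Hb^{1}$), and both $U$ and $U^{-1}=W^{-1}|_{\Hb^{1}}$ have closed graphs in $\Hb^{1}\times\Hb^{1}$: if $\phi_{n}\to\phi$ and $U\phi_{n}\to\eta$ in $\Hb^{1}$, then, by the continuous inclusion $\Hb^{1}\hookrightarrow L^2$, the same convergences hold in $L^2$, and the $L^2$-continuity of $W$ forces $\eta=W\phi=U\phi$. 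By the closed graph theorem $U,U^{-1}\in\BH$, so $U\in\GH$. (Alternatively, this boundedness is precisely the Krein--Lax two-norm principle alluded to in the introduction.) The identity in (iv) is then immediate: for $\phi,\psi\in\Hb^{1}$,
\[ \PIL{U\phi}{\psi}=\PIL{W\phi}{\psi}=\PIL{\phi}{W^{*}\psi}=\PIL{\phi}{W^{-1}\psi}=\PIL{\phi}{U^{-1}\psi}, \]
using $W^{*}=W^{-1}$ and $W^{-1}\psi=U^{-1}\psi$ for $\psi\in\Hb^{1}$ (the latter because $W^{-1}(\Hb^{1})=\Hb^{1}$). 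Lastly, (iv)$\,\Rightarrow\,$(i) follows by putting $\psi=U\phi$ in the identity of (iv): $\|U\phi\|_{L^2}^{2}=\PIL{U\phi}{U\phi}=\PIL{\phi}{U^{-1}U\phi}=\|\phi\|_{L^2}^{2}$, so $U\in\UC$. The only steps that are not pure formalism are the surjectivity of the extension $W$ in (ii)$\,\Rightarrow\,$(iii), whose crux is the observation that $U^{-1}$ inherits the $L^2$-isometry property, and the recovery of $\Hb^{1}$-boundedness of $U$ and $U^{-1}$ in (iii)$\,\Rightarrow\,$(iv), where the closed graph theorem (equivalently the theory of Hilbert spaces with two comparable norms) enters; I expect the latter to be the main point worth spelling out carefully.
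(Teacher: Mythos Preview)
Your proof is correct and follows essentially the same route as the paper's: polarization for (i)$\Leftrightarrow$(ii), density extension to $L^2$ for (ii)$\Rightarrow$(iii), the closed graph theorem to recover $\Hb^{1}$-boundedness for (iii)$\Rightarrow$(iv), and the substitution $\psi=U\phi$ for (iv)$\Rightarrow$(i). The only cosmetic difference is that the paper organizes the equivalences as (i)$\Leftrightarrow$(ii), (ii)$\Rightarrow$(iii)$\Rightarrow$(ii), (iii)$\Rightarrow$(iv)$\Rightarrow$(i), and is terser about the surjectivity of the extension $W$ (it asserts $W$ is unitary from the inner-product identity, implicitly using that $W(\Hb^{1})=\Hb^{1}$ is dense and an isometry has closed range); your argument via extending $U^{-1}$ as well is a perfectly good, slightly more explicit alternative.
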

\begin{proof}
$i) \Leftrightarrow ii)$ The proof is analogous to the characterization of unitary operators (or isometries) in  Hilbert spaces. Let $\phi, \psi \in \Hb^{1}$ and $c \in \CC$, then we have that $\| U (\phi + c\psi) \|_{L^2} = \| \phi + c \psi \|_{L^2}$. We can use the polar identity in $L^2$ to conclude that $\PIL{U\phi}{U\psi}=\PIL{\phi}{\psi}$. The converse is trivial.

\medskip

\noi $ii) \Rightarrow iii)$ Since $\|U \phi \|_{L^2} = \| \phi \|_{L^2}$ and $\Hb^{1}$ is dense in $L^2$, the operator $U$ extends uniquely to a bounded operator $W$ on $L^2$. Moreover, $W$ is unitary since it satisfies $\PIL{W\phi}{W\psi}=\PIL{\phi}{\psi}$.

\medskip

\noi $iii) \Rightarrow ii)$ First we show that $U \in \BH$. To see this, let $(\phi_n)_n$ be a sequence in $\Hb^{1}$ such that $\| \phi_n  \|_{\Hb^{1}} \rightarrow 0$ and $\| U \phi_n - \psi \|_{\Hb^{1}} \rightarrow 0$. Then, $\phi_n \rightarrow 0$  in the topology of $L^2$ and $U \phi_n= W \phi_n \rightarrow 0$ in the topology of $L^2$. Hence we have $\psi=0$, and $U \in \BH$ by the closed graph theorem. 

Now we claim that $U \in \GH$. Indeed, since $U$ is a restriction of $W$, it is apparent that $U$ is injective. By assumption we have that $U$ is surjective. Thus we can use the open mapping theorem to prove our claim.   

Finally, we notice that  $\PIL{U\phi}{U\psi}=\PIL{\phi}{\psi}$ is a consequence of $U$ being a restriction of the unitary operator $W$. 

\medskip

\noi $iii) \Rightarrow iv)$ Under the same assumption, we have just proved that $U \in \UC$. Then, we have $U^{-1} \in \UC$. In particular, there exists $V \in \mathcal{U}(L^2)$ such that $V(\Hb^{1})=\Hb^{1}$ and $V|_{\Hb^{1}}=U^{-1}$. Notice that $VW\phi=VU\phi=U^{-1}U\phi=\phi$, for all $\phi \in \Hb^{1}$. Then we get $VW=I$, and in a similar way, $WV=I$. Thus we obtain $V=W^{\ast}$, and it follows that $U^{-1}=W^{\ast}|_{\Hb^{1}}$. Let $\phi, \psi \in \Hb^{1}$, then
\[ \PIL{U\phi}{\psi}=\PIL{W\phi}{\psi}=\PIL{\phi}{W^{\ast}\psi}=\PIL{\phi}{U^{-1}\psi}. \]

\medskip

\noi $iv) \Rightarrow i)$ We choose $\psi=U\phi$, then   $\|U\phi \|_{L^2} ^2= \PIL{\phi}{U^{-1}U\phi}=\|\phi\|_{L^2} ^2$.   
\end{proof}

\begin{remark}\label{inclusion smooth}
We claim that $\UC$ is a closed subgroup of $\GH$. In fact, it is clear that $\UC$ is a subgroup of $\GH$. Suppose that $(U_n)_n$ is a sequence in $\UC$ satisfying $\| U_n - U\| \rightarrow 0$, where $U \in \GH$. For any $\phi \in \Hb^{1}$, we see that 
\[ \|(U_n - U) \phi \|_{L^2} \leq \|(U_n - U) \phi \|_{\Hb^{1}} \leq \| U_n - U \| \| \phi \|_{\Hb^{1}} \rightarrow 0.
\] 
Hence we obtain $\|U \phi \|_{L^2} =\lim \|U_n \phi\|_{L^2}= \| \phi \|_{L^2}$, and our claim is proved. 
 
 It follows from a well known result on Banach-Lie groups (see \cite[Corollary 7.8]{upmeier85}) that  there exist on $\UC$  a Hausdorff topology and  an analytic  structure compatible with this topology making $\UC$ a  real Banach-Lie group with Lie algebra
\[  
\mathfrak{u}:=\{ \, X \in \BH \, : \, e^{tX} \in \UC, \, \forall \, t \in \RR\, \}.    
\]
Moreover,  the inclusion map $\UC \hookrightarrow \GH$ is analytic, its derivative at the identity is the inclusion map 
$\mathfrak{u} \hookrightarrow \BH$ and $\exp_{\UC}(X)=e^{X}$ for all $X \in \mathfrak{u}$. 
 %It is worth noting that the topology provided by the mentioned result on closed subgroups, usually known as the analytic topology, is in general stronger than the ambient topology.  
\end{remark}

\noi The following result  is the infinitesimal counterpart of Lemma \ref{equivalence UC}. In particular, it is worth pointing out that $i\uF$\,, where $i$ is the complex number, is a well studied class of operators usually known as symmetrizable operators (see \cite{krein98, lax54}). 

\begin{lemma}\label{lie algebra UC}
The following assertions are equivalent:
\begin{enumerate}
\item[i)] $X \in \uF$.
\item[ii)] $X \in \BH$ and $\PIL{X\phi}{\psi}=-\PIL{\phi}{X\psi}$ for all $\phi, \psi \in \Hb^{1}$. 
\item[iii)] There exists $Z \in \mathcal{B}(L^2)$ such that $Z^{\ast}=-Z$, $Z(\Hb^{1})\subseteq \Hb^{1}$ and $Z|_{\Hb^{1}}=X$. 
\end{enumerate} 
\end{lemma}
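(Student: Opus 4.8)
The plan is to establish the cycle of implications $i)\Rightarrow ii)\Rightarrow iii)\Rightarrow i)$, supplemented by the easy converse $iii)\Rightarrow ii)$. The two implications relating $i)$ and $ii)$ are purely ``differential'': they come from differentiating the defining relations of $\UC$ along the analytic one-parameter group $t\mapsto e^{tX}$ (which makes sense because $X\in\BH$, so $e^{tX}\in\GH$ with $\frac{d}{dt}e^{tX}=Xe^{tX}$, and $t\mapsto e^{tX}\phi$ is differentiable into $\Hb^{1}$, hence into $L^{2}$ since the inclusion is contractive). Concretely, for $i)\Rightarrow ii)$: if $X\in\uF$ then $e^{tX}\in\UC$ for all $t$, so Lemma~\ref{equivalence UC} gives $\PIL{e^{tX}\phi}{e^{tX}\psi}=\PIL{\phi}{\psi}$ for $\phi,\psi\in\Hb^{1}$; differentiating this identity at $t=0$ and using continuity of the $L^{2}$ inner product yields $\PIL{X\phi}{\psi}+\PIL{\phi}{X\psi}=0$.

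For $ii)\Rightarrow i)$: fix $\phi\in\Hb^{1}$ and put $f(t)=\|e^{tX}\phi\|_{L^{2}}^{2}$. Then $f'(t)=2\re\PIL{X\eta}{\eta}$ with $\eta=e^{tX}\phi\in\Hb^{1}$, and $ii)$ forces $\PIL{X\eta}{\eta}$ to be purely imaginary, so $f'\equiv 0$ and $\|e^{tX}\phi\|_{L^{2}}=\|\phi\|_{L^{2}}$ for all $t$; since $e^{tX}\in\GH$, this means $e^{tX}\in\UC$, i.e. $X\in\uF$. For $iii)\Rightarrow ii)$: if $Z\in\BL$ is skew-adjoint, leaves $\Hb^{1}$ invariant and restricts to $X$ on $\Hb^{1}$, then a closed graph argument (identical to the one used for $iii)\Rightarrow ii)$ in Lemma~\ref{equivalence UC}) shows $X\in\BH$, and $\PIL{X\phi}{\psi}=\PIL{Z\phi}{\psi}=\PIL{\phi}{Z^{\ast}\psi}=-\PIL{\phi}{X\psi}$.

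The substance lies in $ii)\Rightarrow iii)$, and the single nontrivial point is that $X$, although only assumed bounded for $\|\cdot\|_{\Hb^{1}}$, is automatically bounded for $\|\cdot\|_{L^{2}}$ on $\Hb^{1}$; granting this, $X$ extends uniquely to $Z\in\BL$, the identity in $ii)$ passes to the $L^{2}$-closure to give $Z^{\ast}=-Z$, and $Z(\Hb^{1})=X(\Hb^{1})\subseteq\Hb^{1}$ with $Z|_{\Hb^{1}}=X$. To obtain the $L^{2}$-bound, fix $\phi\in\Hb^{1}$, $\phi\neq 0$, and set $b_{k}=\|X^{k}\phi\|_{L^{2}}^{2}$. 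Applying $ii)$ to the pair $X^{k-1}\phi,\,X^{k}\phi\in\Hb^{1}$ gives $b_{k}=\PIL{X^{k}\phi}{X^{k}\phi}=-\PIL{X^{k-1}\phi}{X^{k+1}\phi}$, so Cauchy--Schwarz in $L^{2}$ shows the sequence $(b_{k})_{k\geq 0}$ is logarithmically convex: $b_{k}^{2}\leq b_{k-1}b_{k+1}$. If $b_{1}=0$ there is nothing to prove; otherwise all $b_{k}>0$, the ratios $b_{k}/b_{k-1}$ are nondecreasing, hence $b_{1}/b_{0}\leq (b_{k}/b_{0})^{1/k}$ for every $k$. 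Since $b_{k}\leq\|X^{k}\phi\|_{\Hb^{1}}^{2}\leq\|X\|^{2k}\|\phi\|_{\Hb^{1}}^{2}$, letting $k\to\infty$ forces $b_{1}/b_{0}\leq\|X\|^{2}$, that is $\|X\phi\|_{L^{2}}\leq\|X\|\,\|\phi\|_{L^{2}}$. This is precisely the classical estimate for symmetrizable operators (compare \cite{krein98,lax54}). I expect this last step --- extracting $L^{2}$-continuity of $X$ from a bare inner-product identity --- to be the main obstacle; everything else (invariance and skew-adjointness of the extension, and the three differentiations) is routine and uses only density of $\Hb^{1}$ in $L^{2}$ together with $\|\cdot\|_{L^{2}}\leq\|\cdot\|_{\Hb^{1}}$.
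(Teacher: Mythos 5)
Your proof is correct, and it establishes exactly the same four implications the paper does, with the easy ones handled the same way: $i)\Rightarrow ii)$ by differentiating along $t\mapsto e^{tX}$ at $t=0$, and $iii)\Rightarrow ii)$ by the closed graph theorem plus skew-adjointness of $Z$. You diverge from the paper in two places. For $ii)\Rightarrow i)$ the paper works with the power series: it proves the parity identity $\PIL{(tX)^n\phi}{\psi}=\pm\PIL{\phi}{(tX)^n\psi}$, sums the partial sums, passes to the limit to get $\PIL{e^{tX}\phi}{\psi}=\PIL{\phi}{e^{-tX}\psi}$, and invokes Lemma~\ref{equivalence UC}; you instead differentiate $f(t)=\|e^{tX}\phi\|_{L^2}^2$, use that $\PIL{X\eta}{\eta}$ is purely imaginary to get $f'\equiv 0$, and conclude $e^{tX}\in\UC$ --- shorter, and the differentiation is justified since $t\mapsto e^{tX}\phi$ is $C^1$ into $\Hb^1$, hence into $L^2$. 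The more substantial difference is in $ii)\Rightarrow iii)$: the paper identifies the $L^2$-boundedness of $X$ as the crucial point but disposes of it by citing Krein's Theorem~I on symmetrizable operators (noting $iX$ is $L^2$-symmetric), whereas you reprove it from scratch via the logarithmic convexity of $b_k=\|X^k\phi\|_{L^2}^2$, arriving at the sharp bound $\|X\phi\|_{L^2}\leq\|X\|\,\|\phi\|_{L^2}$; this is essentially Lax's original argument, so your version is self-contained where the paper leans on the literature, at the cost of a few extra lines (the only point worth spelling out is that if some $b_k=0$ then log-convexity propagates backwards and forces $b_1=0$, so your case split is legitimate). Both routes are sound; yours buys independence from the Krein--Lax reference, the paper's buys brevity.
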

\begin{proof}
$i) \Rightarrow ii)$ By our assumption  the curve $\gamma(t)=e^{tX}$, $t \in \RR$,  is contained in $\UC$. Using Lemma \ref{equivalence UC} we can rewrite this fact as $\PIL{e^{tX}\phi}{\psi}=\PIL{\phi}{e^{-tX}\psi}$, for any $\phi, \psi \in \Hb^{1}$. Taking the derivative of $\gamma$ at $t=0$ we find that $\PIL{X\phi}{\psi}=-\PIL{\phi}{X\psi}$.

\medskip

\noi $ii) \Rightarrow i)$ Suppose that $X \in \BH$ and  $\PIL{X\phi}{\psi}=-\PIL{\phi}{X\psi}$ for all $\phi, \psi \in \Hb^{1}$. It is easily seen that
$$ \PIL{(tX)^n\phi}{\psi}=\left\{
\begin{array}{cc}
-\PIL{\phi}{(tX)^n\psi}              & \, \, \, \, \, \, \, \, \,  \, \, \, \, \, \,\text{if }  n \text{ is odd} ,\\
\PIL{\phi}{(tX)^n\psi}  & \, \, \, \, \, \, \, \, \,   \, \, \, \, \, \,\text{if } n \text{  is even}.
\end{array}\right.
$$
Therefore 
$$\PIL{\bigg(I + tX + \frac{(tX)^2}{2} + \ldots + \frac{(tX)^n}{n!}\bigg)\phi}{\psi}=\PIL{\phi}{\bigg(I - tX + \frac{(tX)^2}{2} + \ldots + (-1)^n\frac{(tX)^n}{n!}\bigg)\psi}.$$
Letting $n \rightarrow \infty$, we have $\PIL{e^{tX}\phi}{\psi}=\PIL{\phi}{e^{-tX}\psi}$. By Lemma \ref{equivalence UC} we conclude that $e^{tX} \in \UC$ for all $t \in \RR$, so $X \in \uF$.

\medskip

%\noi $ii) \Rightarrow iii)$ We begin by proving that $X$ is bounded for the topology of all of $L^2$. This follows straightforward from 
%\[ \sup_{ \phi \in \Hb^{1}, \, \| \phi \|_{L^2}=1} \| X \phi \|_{L^2} \leq \sup_{ \phi \in \Hb^{1}, \, \| \phi \|_{L^2}=1} \| X \phi \|_{\Hb^{1}} \leq \sup_{\| \phi \|_{\Hb^{1}}=1} \| X \phi \|_{\Hb^{1}} < \infty. \] 
%Then there exists a unique  extension $Z$ of $X$ to $L^2$. Finally,  the fact that $Z^{\ast}=-Z$ can be deduced from $\PIL{X\phi}{\psi}=-\PIL{\phi}{X\psi}$, which holds in the dense subspace $\Hb^{1}$ of $L^2$. 

\medskip

\noi $iii) \Rightarrow ii)$ We will use the closed graph theorem  to show that $X \in \BH$. Let $(\phi_n)_n$ be a sequence in $\Hb^{1}$ such that $\| \phi_n \|_{\Hb^{1}} \rightarrow 0$ and $\|X \phi_n - \psi \|_{\Hb^{1}} \rightarrow 0$. Then, we have that $\|\phi_n \|_{L^2} \rightarrow 0$ and so $X\phi_n=Z\phi_n \rightarrow 0=\psi$ in the $L^2$ topology. We thus get $X \in \BH$. 

To complete the proof notice that $\PIL{X\phi}{\psi}=\PIL{Z\phi}{\psi}=-\PIL{\phi}{Z\psi}=-\PIL{\phi}{X\psi}$ for all $\phi, \psi \in \Hb^{1}$. 

\medskip

\noi $ii) \Rightarrow iii)$ The crucial point is to prove that $X$ is bounded with respect to the $L^2$-norm, which can be deduced  from  \cite[Theorem I]{krein98}. In fact, the operator $iX$ is symmetric with respect to the $L^2$ inner product.   Thus the operator $X$ has a bounded extension $Z$  to all of $L^2$, and it is not difficult to check that $Z^*=-Z$.
\end{proof}

\noi  As we stated in Remark \ref{inclusion smooth},  $\UC$ is a Banach-Lie group endowed with a topology that in general is stronger than the one defined by the norm of $\BH$. Actually, we have that both topologies coincide in this group because $\UC$ is an algebraic subgroup of $Gl(\Hb^{1})$ in the sense of \cite{harriskaup77}.

\begin{theorem}\label{topology}
The group $\UC$ is an algebraic subgroup of $Gl(\Hb^{1})$. In particular, $\UC$ is a real Banach-Lie group endowed with the norm topology of $\BH$, and its Lie algebra is given by
\[  \uF= \{ \,  X \in \BH  \,  :  \,  \PIL{X\phi}{\psi}=-\PIL{\phi}{X\psi} ,  \, \forall \,\phi, \psi \in \Hb^{1}  \,  \}. \]
\end{theorem}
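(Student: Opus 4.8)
The plan is to invoke the Harris--Kaup criterion for algebraic subgroups of a Banach-Lie group: a subgroup $G \subseteq \GH$ which is defined by the simultaneous vanishing of a family of polynomial equations (of bounded degree) in the entries $U$, $U^{-1}$ is automatically a Banach-Lie subgroup whose topology coincides with the norm topology inherited from $\GH$, and whose Lie algebra consists of those $X \in \BH$ for which $t \mapsto \exp(tX)$ satisfies the polynomial relations identically. So the first step is to exhibit $\UC$ in this polynomial form. By Lemma~\ref{equivalence UC}(iv), $U \in \UC$ if and only if $U \in \GH$ and $\PIL{U\phi}{\psi} = \PIL{\phi}{U^{-1}\psi}$ for all $\phi,\psi \in \Hb^{1}$; equivalently $\PIL{U\phi}{\psi} - \PIL{\phi}{U^{-1}\psi} = 0$. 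For each fixed pair $(\phi,\psi)$ this is a polynomial identity of degree one in $U$ and in $U^{-1}$ jointly, so $\UC$ is cut out by the family of such polynomials indexed by $(\phi,\psi) \in \Hb^{1} \times \Hb^{1}$. This shows $\UC$ is an algebraic subgroup of $\GH$ in the sense of Harris--Kaup \cite{harriskaup77}.

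Once the algebraic-subgroup property is in hand, the general theorem of Harris--Kaup gives directly that $\UC$ is a (real) Banach-Lie subgroup of $\GH$ equipped with the norm topology of $\BH$, and that its Lie algebra is
\[
\uF = \{\, X \in \BH \,:\, \exp(tX) \text{ satisfies the defining polynomial relations for all } t \in \RR \,\}.
\]
The second step is then to identify this Lie algebra concretely. The defining relations for $\exp(tX)$ read $\PIL{e^{tX}\phi}{\psi} = \PIL{\phi}{e^{-tX}\psi}$ for all $\phi,\psi$ and all $t$, which is precisely the condition $e^{tX} \in \UC$ for all $t \in \RR$, i.e. $X \in \uF$ in the sense of Remark~\ref{inclusion smooth}. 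By Lemma~\ref{lie algebra UC} (equivalence of (i) and (ii)), this is the same as $X \in \BH$ together with $\PIL{X\phi}{\psi} = -\PIL{\phi}{X\psi}$ for all $\phi,\psi \in \Hb^{1}$, which is exactly the stated description of $\uF$.

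The last point to address is the clause ``in particular, $\UC$ is a real Banach-Lie group endowed with the norm topology of $\BH$''. Here one must reconcile the a priori topology from Remark~\ref{inclusion smooth} (obtained from \cite[Corollary 7.8]{upmeier85}, possibly finer than the norm topology) with the norm topology coming from the algebraic-subgroup structure: the Harris--Kaup construction produces a Lie-group structure in the norm topology, and since $\UC$ is closed in $\GH$ (Remark~\ref{inclusion smooth}) with the same Lie algebra $\uF$ in both descriptions, the two analytic structures coincide by uniqueness of the Lie-subgroup structure integrating a given Banach-Lie subalgebra. I expect the only real subtlety — the ``hard part'' — to be the careful bookkeeping of the polynomial-in-$U$-and-$U^{-1}$ formulation so that it genuinely meets the hypotheses of \cite{harriskaup77} (in particular that the degree is uniformly bounded, which it is, being linear), rather than anything computationally heavy; the identification of $\uF$ and the topology statement are then formal consequences of Lemmas~\ref{equivalence UC} and \ref{lie algebra UC} already proved.
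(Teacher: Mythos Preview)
Your proposal is correct and follows essentially the same strategy as the paper: exhibit $\UC$ as the zero set of a bounded-degree family of polynomials in $(U,U^{-1})$, invoke the Harris--Kaup theorem \cite{harriskaup77}, and then read off the Lie algebra from Lemma~\ref{lie algebra UC}. The only cosmetic difference is that the paper uses the polynomials $P_{\phi}((X,Y)) = \PIL{YX\phi}{\phi} - \|\phi\|_{L^2}^2$ (degree $\leq 2$, coming from characterization (i) of Lemma~\ref{equivalence UC}) rather than your degree-one family $p_{\phi,\psi}(X,Y)=\PIL{X\phi}{\psi}-\PIL{\phi}{Y\psi}$ from characterization (iv); and the paper does not bother with your closing paragraph reconciling the two topologies, since the Harris--Kaup conclusion already delivers the norm-topology Lie-group structure directly.
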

\begin{proof}
We first prove that $\UC$ is an algebraic subgroup of $\GH$ of degree $\leq 2$. To see this, we define a family of complex-valued polynomials on $\BH \times \BH$   by 
$$ P_{\phi} (\,(X,Y) \,)= \PIL{YX\phi}{\phi} - \|\phi\|_{L^2}^2, \, \, \, \, \, \, \, \phi \in \Hb^{1}.$$
Then, we have
\[  \UC=\{ \, U \in \GH \, : \, P_{\phi}(\, (U,U^{-1})\,)=0, \, \forall \, \phi \in \Hb^{1}\,  \}. \]
Thus the assertion concerning the topology of $\UC$ follows from \cite[Theorem 1]{harriskaup77}. Finally, the characterization of the Lie algebra has already been  proved in Lemma \ref{lie algebra UC} $ii)$.
\end{proof}

\begin{remark}
According to Lemma \ref{equivalence UC}, we  have a unique unitary extension $W$ to $L^2$ of each operator $U \in \UC$.  Therefore operators in $\UC$ are in bijective correspondence with  
\[   \tilde{\UC}:=\{ \, W \in  \mathcal{U}(L^2) \, : \,  W(\Hb^{1})=\Hb^{1}            \,    \}.  \] 
However, it turns out that $\tilde{\UC}$ is not  closed in $\mathcal{U}(L^2)$. 
Let $(\phi_n)_n$ be a sequence on $\Hb^{1}$ such that $\| \phi_n \|_{L^2}=1$ and $\| \phi_n - \phi \|_{L^2} \to 0$, for some $\phi \in L^2 \setminus \Hb^1$.  Now consider the rank one $L^2$-orthogonal projections $P_n:=\PIL{\, \cdot \,}{\phi_n}\phi_n$ and $P:=\PIL{\, \cdot \,}{\phi}\phi$, and set
\[   W_n:=e^{iP_n}=I + (e^i -1)P_n \,. \]
Note that we have $W_n \in \tilde{U}$. Using that $\| \phi_n - \phi \|_{L^2} \to 0$, it follows that $\|W_n - W\|_{\BL}\to 0$, where $W:=e^{iP}=I + (e^i -1)P$. But  $W (\Hb^{1}) \neq \Hb^1$, by our choice  of the function $\phi$. 
\end{remark}

\section{Stiefel manifold in Quantum Chemistry}
\label{Stiefel}
%\label{sect 2}

Let $N \in \NN$.  The \textit{Stiefel manifold in Quantum Chemistry} is defined by  
\begin{equation}\label{def CN}
\CN:= \{ \, (\phi_1 , \ldots , \, \phi_N) \in (\Hb^{1})^N \, :  \, \PIL{\phi_i}{\phi_j}=\delta_{ij}, \, 1 \leq   i, j \leq N \, \}.   
\end{equation}
We consider the subspace topology on $\CN \subset (\Hb^{1})^N$, which may be defined by 
$$d_{\CN}(\Phi, \Psi)= \bigg( \, \sum_{i=1}^N \| \phi_i - \psi_i \|_{\Hb^{1}} ^2\, \bigg)^{1/2},$$ 
where $\Phi=(\phi_1 , \ldots , \, \phi_N)$ and $\Psi=(\psi_1 , \ldots , \, \psi_N)$ belong to $C_N$.

\medskip 
 
%\begin{remark}
% The results of \cite{C} do not apply in this case. In fact,  we said that $St(N)$ is defined by $N$-tuples of vectors  $(\phi_1, \ldots , \phi_N) \in \h$  satisfying $\PI{\phi_i}{\phi_j}_{\h}=\delta_{ij}$. In this case, the orthogonality condition is defined by the inner product of $\h$. On the other hand,  the orthogonality condition in $\CN$ is defined by the inner product of another Hilbert space $L^2$ such that $\h (=\Hb^{1}) \subset L^2$.  
%\end{remark}

\medskip

\noi We will identify $\CN$ with a subset of $\mathcal{B}(\Hb^{1})$. Let $S$ be a subspace of $\Hb^{1}$ such that $\dim S=N$.  
We define the following Stiefel type manifold:  
\[ St(S):= \{ \, V \in \mathcal{B}(\Hb^{1}) \, : \, \Ker(V)^{\perp_2}=S  , \, \| V \xi\|_{L^2} = \| \xi \|_{L^2}, \, \forall \, \xi \in S  \, \}, \]
where $\perp_{2}$ denotes  the orthogonal complement with respect to the inner product of $L^2$. 
We consider  $St(S)$ endowed with the usual operator topology inherited from $\mathcal{B}(\Hb^{1})$.
%\begin{equation}\label{natural topology}
%d_{St(S)}(V_1, V_2)= \|V_1 - V_2 \|.
%\end{equation}
%where  $\| \, \cdot \, \|$ stands for the operator norm of $\BH$. 

\medskip

\begin{remark}\label{form cn}
Let $\xi_1 , \ldots, \xi_N$ be a basis of $S$ such that $\PIL{\xi_i}{\xi_j}=\delta_{ij}$. We can rewrite 
\[ St(S)=\bigg\{ \, \sum_{i=1}^{N} \PIL{\, \cdot \,}{\xi_i} \phi_i \, : \, (\phi_1 , \, \ldots \, , \phi_N) \in \CN \, \bigg\}. \]
Indeed, any operator of the form $V_{\Phi}=\sum_{i=1}^{N} \PIL{\, . \,}{\xi_i} \phi_i$ satisfies $\Ker(V_{\Phi})=S^{\perp_{L^2}} \cap \Hb^{1}$ and is isometric on $S$. Conversely, each $V \in St(S)$ can be expressed in this form, where $\phi_i=V \xi_i$ for $i=1, \ldots, N$.
\end{remark}

\begin{lemma}
 $\CN$ and $St(S)$ are homomorphic.
\end{lemma}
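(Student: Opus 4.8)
The plan is to exhibit an explicit homeomorphism between $\CN$ and $St(S)$ using the description in Remark \ref{form cn}. Fix once and for all an $L^2$-orthonormal basis $\xi_1, \ldots, \xi_N$ of $S$ (here I use "homomorphic" as a typo for "homeomorphic"). First I would define the map $F \colon \CN \to St(S)$ by
\[
F(\Phi) = V_\Phi := \sum_{i=1}^N \PIL{\,\cdot\,}{\xi_i}\,\phi_i, \qquad \Phi = (\phi_1, \ldots, \phi_N) \in \CN,
\]
and note that $F$ indeed lands in $St(S)$: each $V_\Phi$ is bounded on $\Hb^1$ since $|\PIL{\eta}{\xi_i}| \le \|\eta\|_{L^2}\|\xi_i\|_{L^2} \le \|\eta\|_{\Hb^1}\|\xi_i\|_{\Hb^1}$, and the kernel and isometry properties are exactly the computation already recorded in Remark \ref{form cn}. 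The inverse map $G \colon St(S) \to \CN$ is given by $G(V) = (V\xi_1, \ldots, V\xi_N)$; that this is well-defined (i.e.\ $(V\xi_i)_i \in \CN$) follows because $V$ is $L^2$-isometric on $S$ and the $\xi_i$ are $L^2$-orthonormal, so $\PIL{V\xi_i}{V\xi_j} = \PIL{\xi_i}{\xi_j} = \delta_{ij}$ by polarization. The two maps are mutually inverse: $G(F(\Phi)) = (V_\Phi \xi_1, \ldots, V_\Phi \xi_N) = (\phi_1, \ldots, \phi_N)$ because $\PIL{\xi_k}{\xi_i} = \delta_{ki}$, and $F(G(V)) = \sum_i \PIL{\,\cdot\,}{\xi_i} V\xi_i = V$ since both sides agree on $S$ and vanish on $S^{\perp_2} \cap \Hb^1 = \Ker V$.

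Next I would check continuity of both maps for the stated topologies (the metric $d_{\CN}$ on $\CN$ and the operator-norm topology on $St(S) \subset \BH$). For $G$: if $\|V - V'\| $ is small in $\BH$, then $\|V\xi_i - V'\xi_i\|_{\Hb^1} \le \|V - V'\|\,\|\xi_i\|_{\Hb^1}$, so $d_{\CN}(G(V), G(V')) \le \big(\sum_i \|\xi_i\|_{\Hb^1}^2\big)^{1/2}\|V - V'\|$, giving Lipschitz continuity. For $F$: for any $\eta \in \Hb^1$ with $\|\eta\|_{\Hb^1} \le 1$,
\[
\|(V_\Phi - V_\Psi)\eta\|_{\Hb^1} = \Big\| \sum_{i=1}^N \PIL{\eta}{\xi_i}(\phi_i - \psi_i)\Big\|_{\Hb^1} \le \sum_{i=1}^N \|\xi_i\|_{\Hb^1}\,\|\phi_i - \psi_i\|_{\Hb^1},
\]
so $\|V_\Phi - V_\Psi\| \le C\, d_{\CN}(\Phi, \Psi)$ for a constant $C$ depending only on the $\xi_i$, again Lipschitz. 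Hence $F$ is a homeomorphism (in fact bi-Lipschitz) and $\CN \cong St(S)$.

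I do not expect a serious obstacle here; the statement is essentially a bookkeeping lemma that rigidifies Remark \ref{form cn}. The only points requiring a moment's care are verifying that $V_\Phi$ is genuinely bounded as an operator on $\Hb^1$ (which uses $\|\cdot\|_{L^2} \le \|\cdot\|_{\Hb^1}$) and that the description $\Ker(V_\Phi) = S^{\perp_2} \cap \Hb^1$ together with isometry on $S$ characterizes exactly the elements of $St(S)$ — but both of these are already granted by Remark \ref{form cn}, so the proof reduces to assembling the mutually inverse continuous maps as above. If one wants the stronger assertion that the homeomorphism is independent of the choice of orthonormal basis of $S$, that follows immediately since changing the basis conjugates $V_\Phi$ by a fixed unitary of $\CC^N$ acting on the coordinates, but this is not needed for the statement as phrased.
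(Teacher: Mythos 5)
Your proof is correct and follows essentially the same route as the paper: the same identification $\Phi \mapsto V_\Phi = \sum_{i=1}^N \PIL{\,\cdot\,}{\xi_i}\phi_i$ together with the two Lipschitz estimates $d_{\CN}(\Phi,\Psi) \lesssim \|V_\Phi - V_\Psi\|$ and $\|V_\Phi - V_\Psi\| \lesssim d_{\CN}(\Phi,\Psi)$, so the map is bi-Lipschitz and hence a homeomorphism (the paper's ``homomorphic'' is indeed a typo). The only cosmetic difference is that you write out the inverse $V \mapsto (V\xi_1,\ldots,V\xi_N)$ and the mutual-inverse check explicitly, which the paper delegates to Remark \ref{form cn}.
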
  
\begin{proof}
Let $\{ \, \xi_1, \ldots, \xi_N \, \}$  be a basis of $S$ satisfying $\PIL{\xi_i}{\xi_j}=\delta_{ij}$. For each element $\Phi=(\phi_1, \ldots , \phi_N) \in \CN$, we set   
$$
V_{\Phi}  \xi = \sum_{i=1}^N \PIL{\xi}{\xi_i}  \phi_i \,,
$$
for all $\xi \in \Hb^{1}$. Then we have $V_{\Phi} \in St(S)$, and the map $\CN \longrightarrow St(S), \, \, \, \, \, \Phi \mapsto V_{\Phi}$, is a bijection. 
Moreover, this   map is a homomorphism. In fact, note that 
\[  
\| \phi_i - \psi_i \|_{\Hb^{1}} = \| V_{\Phi} \xi_i - V_{\Psi} \xi_i \|_{\Hb^{1}} \leq \| V_{\Phi} - V_{\Psi} \| \, \| \xi_i \|_{\Hb^{1}}\,.   
\]
Then, we have
\[ 
d_{\CN}(\Phi, \Psi)   \leq \sqrt{N} \max_{1 \leq i \leq N}  \| \xi _i \|_{\Hb^{1}} \, \, \| V_{\Phi} - V_{\Psi}\| .
\] 
On the other hand, let $\xi \in \Hb^{1}$ such that $\| \xi \|_{\Hb^{1}}=1$. Then we get
\begin{align*}
 \|(V_{\Phi} - V_{\Psi})\xi\| & = \bigg\| \sum_{i=1}^N \PIL{\xi}{\phi_i} (\phi_i - \psi_i) \bigg\|_{\Hb^{1}} 
 \leq \sum_{i=1}^N \| \xi \|_{L^2} \| \xi_i \|_{L^2} \| \phi_i - \psi_i \|_{\Hb^{1}} \\
&  
 \leq \sum_{i=1}^N  \| \phi_i - \psi_i \|_{\Hb^{1}} \leq \sqrt{N} \, d_{\CN}(\Phi, \Psi),
\end{align*}  
and hence we obtain $\|V_{\Phi} - V_{\Psi}\| \leq \sqrt{N} \, d_{\CN}(\Phi, \Psi)$.
\end{proof}

\medskip

\begin{notationnf}Bear in mind the above identification, throughout the remainder of the paper we will only use the notation $\CN$ to indicate indistinctly the $N$-tuple or the operator presentation of the Stiefel manifold in Quantum Chemistry.  
\end{notationnf}

%The notation $St(S)$ was only introduced to show the above identification. Throughout the remainder of the paper, we will write $\CN$ to  
%This allows us to study   geometric features of $\CN$ by means of methods in	operator theory . 

%The following result is the main motivation for the definition of the Banach-Lie group $\UC$.                                                                                                                                                                            
%\subsection{Differential structure of $\CN$}
 
%In this section we show that $\CN$ is a analytic submanifold of $\BH$ and a real analytic homogeneous space of $\UC$.
  
\begin{lemma}\label{transitive}
The  map
\[ \UC \times \CN \longrightarrow \CN, \, \, \, \, \, \, \, U\cdot V = UV, \]
is a  transitive action of the Banach-Lie group $\UC$ on $\CN$.
\end{lemma}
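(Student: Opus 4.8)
The plan is to check that $U\cdot V=UV$ really takes values in $\CN$, that it satisfies the axioms of a group action, and that it is transitive; only the last point requires genuine work. For well-definedness it is cleanest to work in the $N$-tuple picture, where the action reads $U\cdot(\phi_1,\ldots,\phi_N)=(U\phi_1,\ldots,U\phi_N)$ (this is consistent with the operator picture, since $UV_\Phi=\sum_i\PIL{\,\cdot\,}{\xi_i}U\phi_i$). Because $U\in\GH$ we have $U\phi_i\in\Hb^{1}$, and because $U\in\UC$, Lemma \ref{equivalence UC} $ii)$ gives $\PIL{U\phi_i}{U\phi_j}=\PIL{\phi_i}{\phi_j}=\delta_{ij}$; hence $U\cdot\Phi\in\CN$. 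The action axioms $I\cdot V=V$ and $(U_1U_2)\cdot V=U_1\cdot(U_2\cdot V)$ are immediate from associativity of operator composition together with the fact that $\UC$ is a group, and continuity (indeed analyticity) of the action map follows since it is the restriction to $\UC\times\CN$ of the multiplication $\BH\times\BH\to\BH$.

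For transitivity, fix $\Phi=(\phi_1,\ldots,\phi_N)$ and $\Psi=(\psi_1,\ldots,\psi_N)$ in $\CN$; the aim is to produce $U\in\UC$ with $U\phi_i=\psi_i$ for all $i$. I would construct $U$ as the restriction of a global $L^2$-unitary: by Lemma \ref{equivalence UC} $iii)$ it is enough to find $W\in\mathcal{U}(L^2)$ with $W(\Hb^{1})=\Hb^{1}$ and $W\phi_i=\psi_i$. Put $S:=\hull\{\phi_1,\ldots,\phi_N,\psi_1,\ldots,\psi_N\}$, a finite-dimensional subspace of $\Hb^{1}$, and $S_\Phi:=\hull\{\phi_i\}$, $S_\Psi:=\hull\{\psi_i\}$. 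Since $\{\phi_i\}$ and $\{\psi_i\}$ are $L^2$-orthonormal, $\phi_i\mapsto\psi_i$ is an $L^2$-isometry of $S_\Phi$ onto $S_\Psi$, and since these two subspaces have the same dimension inside the finite-dimensional space $S$, it extends to an $L^2$-unitary $W_0$ of $S$ onto itself. Define $W:=W_0\oplus I$ with respect to the $L^2$-orthogonal decomposition $L^2=S\oplus S^{\perp_{2}}$; then $W\in\mathcal{U}(L^2)$ and $W\phi_i=\psi_i$.

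The point that needs care, and which I expect to be the main obstacle, is that $W$ must preserve $\Hb^{1}$ — a generic $L^2$-unitary extending $\phi_i\mapsto\psi_i$ will not, as the final Remark of Section \ref{B-Lie group} already illustrates. The remedy is the observation that, because $S$ is finite-dimensional and $S\subseteq\Hb^{1}$, the $L^2$-orthogonal projection $P$ onto $S$ maps $\Hb^{1}$ into itself: $P\phi$ is a finite linear combination of a fixed $L^2$-orthonormal basis of $S$, which may be chosen inside $\Hb^{1}$. Hence $\Hb^{1}=S\oplus(\Hb^{1}\cap S^{\perp_{2}})$, and since $W$ maps $S$ onto $S$ and fixes $S^{\perp_{2}}$ pointwise, we get $W(\Hb^{1})=S\oplus(\Hb^{1}\cap S^{\perp_{2}})=\Hb^{1}$. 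Then $U:=W|_{\Hb^{1}}\in\UC$ by Lemma \ref{equivalence UC} $iii)$, and $U\cdot\Phi=\Psi$, which establishes transitivity.
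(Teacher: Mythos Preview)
Your proof is correct and follows essentially the same route as the paper: both arguments pass to the finite-dimensional span $S$ of the two $N$-tuples, extend the $L^2$-isometry $\phi_i\mapsto\psi_i$ to an $L^2$-unitary of $S$ onto itself, act by the identity on $S^{\perp_2}\cap\Hb^1$, and use the decomposition $\Hb^1=S\oplus(S^{\perp_2}\cap\Hb^1)$ to conclude that the resulting operator lies in $\UC$. The paper makes the extension step explicit by completing $\{\phi_i\}$ and $\{\psi_i\}$ to two $L^2$-orthonormal bases of $S$ and checks the norm condition directly, while you invoke Lemma~\ref{equivalence UC}~$iii)$; these are cosmetic differences.
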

\begin{proof}
Let $S$ be an $N$-dimensional subspace of $\Hb^{1}$. Let  $V \in \BH$    such that  $\Ker(V)^{\perp_2}=S$ and  $\|V\xi \|_{L^2}=\| \xi \|_{L^2}$ for all $\xi \in S$. It follows that $\| UV \xi \|_{L^2}=\|V\xi \|_{L^2}=\|\xi\|_{L^2}$ for all $\xi \in S$ and $U \in \UC$, and also that $\Ker(V)^{\perp_2}=S$. This shows that $UV \in \CN$, whenever $U \in \UC$ and $V \in \CN$, so the action is well-defined.

Let $V_0, V_1 \in \CN$. 
We need to find an $U \in \UC$ such that $U V_0=V_1$. Let $\xi_1, \ldots , \xi_N$ be a basis of $S$ such that $\PIL{\xi_i}{\xi_j}=\delta_{ij}$. Then
$S_0={\rm span} \, \{  \, V_0\xi_1 , \ldots, V_0\xi_N , V_1\xi_1 , \ldots, V_1\xi_N  \, \} \subseteq \Hb^{1}$ has finite dimension, say $R$,  with  $N \leq R \leq 2N$. Then we can construct two orthonormal basis of $S_0$ with respect to the  inner product of $L^2$, namely
$V_0\xi_1 ,\ldots, V_0\xi_N, \alpha_{N +1}, \alpha_{N + 2} , \ldots, \alpha_{R}$ and $V_1\xi_1 , \ldots, V_1\xi_N, \beta_{N +1}, \beta_{N + 2} , \ldots, \beta_{R}$. 
 
Note that $\Hb^{1}=S_0 \oplus (S_0^{\perp_{2}} \cap \Hb^{1})$, where  the sum is direct and both subspaces are closed in $\Hb^{1}$. 
Therefore we can define the required operator by

$$ U \xi :=\left\{
\begin{array}{ccc}
\displaystyle{\sum_{i=1}^N} c_i V_1 \xi_i + \displaystyle{\sum_{i=N+1}^{R}} c_i \beta_i  \, \, \, \, \, \, & \text{ if } \, \, \,  \, \, \, \, \, \, \xi=\displaystyle{\sum_{i=1}^N} c_i V_0 \xi_i + \displaystyle{\sum_{i=N+1}^{R}} c_i \alpha_i \in S_0  ,\\

\\
\xi \, \, \, \, \, \, & \text{ if } \, \, \,   \, \, \, \, \, \,  \xi \in S_0^{\perp_{L^2}} \cap \Hb^{1}.
\end{array}\right.
$$
We point out that $U$ leaves invariant $S_0$ and $S_0^{\perp_{L^2}}\cap \Hb^{1}$. It is apparent that $U \in \GH$. Also notice that for any $\xi \in \Hb^{1}$ we can write $$\xi=\sum_{i=1}^N c_i V_0 \xi_i + \sum_{i=N+1}^{R} c_i \alpha_i + \xi_0,$$ for some $c_i \in \CC$ and $\xi_0 \in S_0^{\perp_{L^2}}\cap \Hb^{1}$. Then,
\[ \|U\xi\|_{L^2}^2=\bigg\|\displaystyle{\sum_{i=1}^N} c_i V_1 \xi_i + \displaystyle{\sum_{i=N+1}^{R}} c_i \beta_i\bigg\|_{L^2}^2 + \| \xi_0\|_{L^2}^2=\sum_{i=1}^R |c_i|^2 + \|\xi_0\|_{L^2}^2=\|\xi\|_{L^2}^2.   \]
We thus get $U \in \UC$. Moreover, it is clear that $UV_0=V_1$.  
\end{proof}

\medskip

\subsection{Construction of continuous local cross sections}

Let $V \in \CN$. In this section we prove that the map $\pi_V: \UC \longrightarrow \CN$, $\pi_V(U)=UV$ has local continuous cross sections. We will use this result in the next section to show that $\CN$ is a real analytic homogeneous space of $\UC$ and a submanifold of $\BH$. 

 We begin by establishing the continuity of several maps.

\begin{remark}\label{square root}
Let $P,Q \in \BH$ be two projections  of rank $N$ such that  $P=P^{\ast}$ and $Q=Q^{\ast}$, where the adjoint is taken with respect to the $L^2$ inner product. Since the rank is finite, we may view $P$ and $Q$ as (continuous) orthogonal projections on $L^2$. We set
$A := (I-P)(I-Q)(I-P)$. Clearly it satisfies $0\leq A \leq I$, when one considers the order given by the cone of positive operators in $\BL$.
An easy computation shows that 
\[ A=I -P -Q +QP + PQ -PQP.  \]
Therefore $A=I + B$, where $B$ is a finite rank  $L^2$-self-adjoint operator. Moreover, note that $-I \leq B \leq 0$ and its range satisfies  
$\Ran(B)\subseteq \Hb^{1}$. We claim that the square root of $A$ satisfies  $A^{1/2}(\Hb^{1})\subseteq \Hb^{1}$. Here $A^{1/2}$ is 
defined as usual by the continuous functional calculus in $\BL$. To prove our claim we shall need a result on the convergence of the 
binomial series: the series
\[  (1+z)^{\alpha}=1 + \sum_{k=1}^{\infty}c_k z^k,  \, \, \, \, \, \, \, c_k=\binom{\alpha}{k}= \frac{\alpha(\alpha-1)(\alpha -2)\ldots(\alpha - k +1)}{k!},
\]
converges absolutely  for $|z|\leq 1$ whenever $\re (\alpha)>0$ (see for instance \cite[Theorem 247]{knopp51}). In particular, for $\alpha=1/2$ the power series converges uniformly on $|z|\leq 1$. Then we can define the square root of $A$ using the series, i.e.
\[ 
A^{1/2}=(I+B)^{1/2}=I + \sum_{k=1}^{\infty}c_k B^k. 
\]    
This operator series is convergent in the norm of $\BL$. Moreover, this definition coincides with the continuous functional calculus since 
the power series is uniformly convergent on $|z|\leq 1$ and the spectrum $\sigma(B|L^2)$ of $B$ on $L^2$ is contained in $[-1,0]$.

Now we can prove that $A^{1/2}(\Hb^1)\subseteq \Hb^1$.   Let $\xi \in \Hb^{1}$, then
\[   A^{1/2}\xi = \xi + B \bigg(\sum_{k=1}^{\infty} c_k B^{k-1}\bigg)\xi. 
\] 
As a consequence of the fact that $\Ran(B)\subseteq \Hb^{1}$, we can conclude $A^{1/2}\xi \in \Hb^{1}$, and our claim is proved.   
\end{remark}

\begin{lemma}\label{sqrt continuous}
Let $V \in \CN$. Then the map
\[ F:\CN \subseteq \BH \longrightarrow \BH , \, \, \, \, \,  F(W)=(\, (I- VV^{\ast})(I-WW^{\ast})(I-VV^{\ast})  \,)^{1/2} |_{\Hb^{1}} \]
is continuous, when the adjoint and the square root are with respect to the $L^2$ inner product. 
\end{lemma}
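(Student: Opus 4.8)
The plan is to reduce the $\BH$-continuity of $F$, through Remark~\ref{square root}, to two more elementary facts: the continuity of $W\mapsto WW^{\ast}$ measured in the operator norm of maps $L^2\to\Hb^{1}$, and the (standard) norm-continuity of the continuous functional calculus for self-adjoint operators on $L^2$. Concretely, I would apply Remark~\ref{square root} with $P=VV^{\ast}$ and $Q=WW^{\ast}$ (both rank-$N$, $L^2$-orthogonal projections), put $A_{W}:=(I-VV^{\ast})(I-WW^{\ast})(I-VV^{\ast})$ and $B_{W}:=A_{W}-I$, so that $B_{W}=-VV^{\ast}-(I-VV^{\ast})WW^{\ast}(I-VV^{\ast})$; Remark~\ref{square root} then gives that $B_{W}$ is a finite rank, $L^2$-self-adjoint operator with $-I\le B_{W}\le 0$ and $\Ran(B_{W})\subseteq\Hb^{1}$, and that $A_{W}^{1/2}=I+\sum_{k\ge1}c_{k}B_{W}^{k}=I+B_{W}\,g(B_{W})$, where $g(z):=\sum_{k\ge1}c_{k}z^{k-1}$ is continuous on $[-1,1]$ (the $c_{k}$ being absolutely summable) and the series converges in $\BL$. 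Since $g(B_{W})\in\BL$ and $B_{W}$ maps $L^2$ boundedly into $\Hb^{1}$, the operator $B_{W}g(B_{W})$ is bounded from $L^2$ into $\Hb^{1}$; as $F(W)=I+\bigl(B_{W}g(B_{W})\bigr)\big|_{\Hb^{1}}$ and $\|\xi\|_{L^2}\le\|\xi\|_{\Hb^{1}}$ for $\xi\in\Hb^{1}$, one obtains
\[
\|F(W)-F(W')\|_{\BH}\ \le\ \bigl\|\,B_{W}g(B_{W})-B_{W'}g(B_{W'})\,\bigr\|_{L^2\to\Hb^{1}}.
\]
Writing $B_{W}g(B_{W})-B_{W'}g(B_{W'})=B_{W}\bigl(g(B_{W})-g(B_{W'})\bigr)+(B_{W}-B_{W'})g(B_{W'})$ and $B_{W}-B_{W'}=(I-VV^{\ast})\bigl(W'(W')^{\ast}-WW^{\ast}\bigr)(I-VV^{\ast})$, it then remains to check, locally near a fixed $W_{0}\in\CN$, that $\|B_{W}\|_{L^2\to\Hb^{1}}$ and $\|g(B_{W'})\|_{\BL}$ stay bounded, that $W\mapsto WW^{\ast}$ is continuous into $\BL$ and into the bounded maps $L^2\to\Hb^{1}$, and that $W\mapsto g(B_{W})$ is continuous into $\BL$.

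For the map $W\mapsto WW^{\ast}$, I would fix an $L^2$-orthonormal basis $\xi_{1},\dots,\xi_{N}$ of $S=\Ker(V)^{\perp_{2}}$ and note that, with $\phi_{i}:=W\xi_{i}$ (again $L^2$-orthonormal), one has $WW^{\ast}=\sum_{i}\PIL{\,\cdot\,}{\phi_{i}}\phi_{i}$. Using $\|\phi_{i}-\phi_{i}'\|_{\Hb^{1}}\le\|W-W'\|\,\|\xi_{i}\|_{\Hb^{1}}$, $\|\phi_{i}\|_{L^2}=1$, and the rank-one structure of the summands, a direct estimate shows that $\|WW^{\ast}-W'(W')^{\ast}\|_{\BL}$ and $\|WW^{\ast}-W'(W')^{\ast}\|_{L^2\to\Hb^{1}}$ are both $\lesssim\|W-W'\|$ near $W_{0}$, while $\|WW^{\ast}\|_{L^2\to\Hb^{1}}\le\sum_{i}\|\phi_{i}\|_{\Hb^{1}}$ is locally bounded. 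Together with $\|I-VV^{\ast}\|_{\BL}\le1$, the boundedness of $I-VV^{\ast}$ on $\Hb^{1}$, and $\|VV^{\ast}\|_{L^2\to\Hb^{1}}<\infty$, this yields the continuity of $W\mapsto WW^{\ast}$, hence of $W\mapsto B_{W}$ in both topologies, and the local boundedness of $\|B_{W}\|_{L^2\to\Hb^{1}}$. For the functional calculus, since $g$ is continuous on the fixed compact set $[-1,0]\supseteq\sigma(B_{W}|L^2)$, the usual norm-continuity of the continuous functional calculus --- approximate $g$ uniformly by polynomials on $[-1,0]$ and use $\|B_{W}^{k}-B_{W'}^{k}\|_{\BL}\to0$ --- gives that $W\mapsto g(B_{W})$ is continuous into $\BL$ and that $\|g(B_{W})\|_{\BL}\le\sup_{[-1,0]}|g|$. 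Assembling these facts with the displayed inequality proves the lemma.

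The step I expect to be the real obstacle is exactly the clash of the two norms: the square root is accessible only through the $L^2$ functional calculus, whereas continuity is demanded in the $\Hb^{1}$ operator norm, and a bounded operator on $L^2$ whose range lies inside $\Hb^{1}$ need in general not be bounded as a map $L^2\to\Hb^{1}$, let alone with uniform control. What makes the argument work is Remark~\ref{square root}'s assertion that $A_{W}^{1/2}-I$ is a \emph{finite rank} operator with range in $\Hb^{1}$: this allows $\BH$-estimates to be traded for $L^2\to\Hb^{1}$-estimates of the finite-rank perturbation, which then factors cleanly into a purely algebraic piece (governed by $W\mapsto WW^{\ast}$) and a functional-calculus piece (governed by norm-continuity of the calculus on $L^2$). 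Everything else is bookkeeping with the inequality $\|\cdot\|_{L^2}\le\|\cdot\|_{\Hb^{1}}$.
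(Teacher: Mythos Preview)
Your argument is correct and takes a genuinely different route from the paper's proof. The paper works directly with the binomial series in $\BH$: writing $F(V_n)=I+\sum_{k\ge 1}c_kB_n^{k}$, it splits $\|F(V_n)-F(V_0)\|$ into a finite sum $\sum_{k\le s}|c_k|\,\|B_n^{k}-B_0^{k}\|$ plus two tails $\|\sum_{k>s}c_kB_n^{k}\|$ and $\|\sum_{k>s}c_kB_0^{k}\|$, and the main technical effort is a \emph{uniform-in-$n$} bound on the tail in the $\BH$-norm, obtained by factoring $B_n$ on the left as $P(\cdots)+Q_n(\cdots)$ and exploiting the rank-one expansions of $P$ and $Q_n$ to convert the remaining series into an $L^2$-estimate controlled by $\sum_{k>s}|c_k|$.

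Your approach bypasses this tail computation entirely by introducing the intermediate norm $\|\cdot\|_{L^2\to\Hb^{1}}$ and the factorization $A_W^{1/2}-I=B_W\,g(B_W)$. The point is that one factor, $B_W$, is finite rank with range in $\Hb^{1}$ and hence bounded (locally uniformly) as a map $L^2\to\Hb^{1}$, while the other factor, $g(B_W)$, lives purely in $\BL$ where continuity of the functional calculus is standard. This cleanly separates the two-norm difficulty from the analytic one, and what the paper achieves by an explicit tail estimate you get structurally from the inequality $\|B_W\,h\|_{L^2\to\Hb^{1}}\le\|B_W\|_{L^2\to\Hb^{1}}\|h\|_{\BL}$. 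The paper's method is more hands-on and self-contained (it never names the $L^2\to\Hb^{1}$ operator norm), whereas yours is shorter and more conceptual; both ultimately rely on the same two ingredients, namely the absolute summability of the binomial coefficients from Remark~\ref{square root} and the rank-one description of $WW^{\ast}$, which in the paper appears as Lemma~\ref{cont projection}.
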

\begin{proof}
We first notice that according to Remark \ref{square root} the domain of $F(W)$ can be restricted to $\Hb^{1}$ in order to obtain a bounded operator on $\Hb^{1}$.

Let $(V_n)_n$ be a sequence in $\CN$ such that $\| V_n - V_0\| \to 0$. We set $P=VV^{\ast}$ and $Q_n=V_nV_n^{\ast}$, for $n \geq 0$. As in the Remark \ref{square root} we use the notation 
$$ B_n = -P -Q_n + PQ_n + Q_nP -PQ_nP ,  $$
 where $(I-P)(I-Q_n)(I-P)= I + B_n$. Notice that $B_n$ has finite rank, it is $L^2$ self-adjoint and $\sigma(B_n | L^2)\subseteq [-1,0]$. In particular, it follows that $\| B_n \|_{\BL} \leq 1$. 
 
 On the other hand,  it is worth noting that for each $n \geq 0$ the series
$$   \sum_{k=1} ^{\infty} c_k B_n ^k= c_1B_n + B_n\bigg(  \sum_{k=2}^{\infty} c_kB_n ^{k-2}  \bigg)B_n    $$
converges in the norm of $\BH$. In fact,  the series converges in $\BL$, and since $B_n$ has finite rank, it is forced to  be convergent in $\BH$. Moreover, we must have that $F(V_n)=I + \sum_{k=1}^{\infty} c_k B_n^{k}$.

Beside these remarks, we can now focus on the proof of the continuity of $F$. Fix $\epsilon >0$. Let $s \in \NN$,  then
\begin{align}
 \| F(V_n) - F(V_0)  \| & =  \bigg\| \sum_{k=1}^{\infty} c_k(B_n ^k - B_0 ^k) \bigg\| \nonumber    \\
& \leq \sum_{k=1}^{s} |c_k| \, \| B_n ^k - B_0 ^k\| +  \bigg\| \sum_{k=s+1}^{\infty} c_k B_n ^k \bigg\| + \bigg\| \sum_{k=s+1}^{\infty} c_k B_0 ^k \bigg\| \label{ineq1}
\end{align}
By the definition of $B_n$ we see that the first term on the right tends to zero for any fixed $s$. As we remarked in the previous paragraph, the series $\sum_{k=1}^{\infty} c_k B_0 ^k$ is convergent in the norm of $\BH$, so the third term can be made as small as we need. What  left is to show that the second  term can be smaller than $\epsilon$ for $s$ large enough. This is the main point because the convergence in $\BH$ of these series  depend on each $n\geq 1$.  Since   the norm in $\BL$  of each $B_n$ is never greater  than the norm in $\BH$ (see \cite[Theorem I]{krein98}), we cannot ensure that each series be absolutely convergent in $\BH$.    

In order to bound the second term we need to use that $V_n , V \in \CN$. Let $S$ be an $N$-dimensional subspace of $\Hb^{1}$ as in Remark \ref{form cn}.  Let $\xi_1 , \ldots , \xi_N$ be a basis of $S$ such that $\PIL{\xi_i}{\xi_j}=\delta_{ij}$. 
%Our estimates will involve the  constant:
%$$c:=\max_{1 \leq i \leq N} \| \xi_i \|_{\Hb^{1}}.$$
%Also we need to note that 
%\[  \|V\|=\sup_{\|\xi\|_{\Hb^{1}}=1} \| \sum_{i=1}^N\PIL{\xi}{\xi_i}V\xi_i \|_{\Hb^{1}} \leq \sum_{i=1}^N \|\xi\|_{L^2} \| \xi_i\|_{L^2} \|V\xi_i \|_{\Hb^{1}} \leq   \]
Recall that $B_n=P(-I + Q_n - Q_nP) + Q_n (-I +P)$, then
\begin{equation}\label{bound1}
\bigg\| \sum_{k=s+1}^{\infty} c_k B_n ^k  \bigg\| \leq \bigg\| P(-I + Q_n - Q_nP) \sum_{k=s+1}^{\infty} c_k B_n ^{k-1}   \bigg\| + \bigg\|  Q_n (-I +P)  \sum_{k=s+1}^{\infty} c_k B_n ^{k-1}    \bigg\| 
\end{equation}
An easy computation shows that $P=\sum_{i=1}^N \PIL{\, . \,}{V\xi_i}V\xi_i$. We write $X_n=-I + Q_n - Q_nP$ for short. Notice that $\| X_n \|_{\BL}\leq 3$.  Then the first term in (\ref{bound1}) can bounded as follows: for $\xi \in \Hb^{1}$, $\| \xi\|_{\Hb^{1}}=1$, 
\begin{align*}
 \bigg\| PX_n \sum_{k=s+1}^{\infty} c_k B_n ^{k-1}  \xi  \bigg\|_{\Hb^{1}} &
  =  \bigg\|  \sum_{i=1}^N \PIL{X_n \sum_{k=s+1}^{\infty} c_k B_n ^{k-1}\xi}{V\xi_i}V\xi_i    \bigg\|_{\Hb^{1}} \\
& \leq   \sum_{i=1}^N \bigg|\PIL{X_n \sum_{k=s+1}^{\infty} c_k B_n ^{k-1}\xi}{V\xi_i}  \bigg|    \,  \| V\xi \|_{\Hb^{1}} \\
& \leq \sum_{i=1}^N \|X_n\|_{\BL} \bigg\|   \sum_{k=s+1}^{\infty} c_k B_n ^{k-1}\xi  \bigg\|_{L^2} \| V\xi_i\|_{L^2} \,  \| V\| \\
& \leq 3 N \bigg(  \sum_{k=s+1}^{\infty} |c_k| \, \|B_n\|_{\BL}^{k-1} \bigg)  \| V\|  \leq 3 N\| V \|  \sum_{k=s+1}^{\infty} |c_k|.
\end{align*}
Thus we have
\begin{equation}\label{bound2}
\bigg\| P(-I + Q_n - Q_nP) \sum_{k=s+1}^{\infty} c_k B_n ^{k-1}   \bigg\| \leq 3 N \| V \|  \sum_{k=s+1}^{\infty} |c_k|.
\end{equation}
The second term on the right in (\ref{bound1}) can be bounded in a similar fashion, namely
\begin{align*}
\bigg\|  Q_n (-I +P)  \sum_{k=s+1}^{\infty} c_k B_n ^{k-1}  \xi  \bigg\| & 
\leq \sum_{i=1}^N \bigg| \PIL{ (-I +P)\sum_{k=s+1}^{\infty} c_k B_n ^{k-1} \xi}{V_n \xi_i}  \bigg| \, \| V_n \xi \|_{\Hb^{1}} \\
& \leq \sum_{i=1}^N   \bigg \| \sum_{k=s+1}^{\infty} c_k B_n ^{k-1} \bigg \|_{\BL} \| V_n \| \\
& \leq N \| V_n \| \sum_{k=s+1}^{\infty} |c_k |. 
\end{align*}
Since $(V_n)_n$ is convergent, then there exists $K>0$ such that $\| V_n \| \leq K$, for all $n \geq 1$. 
Therefore we have
\begin{equation}\label{bound3}
\bigg\|  Q_n (-I +P)  \sum_{k=s+1}^{\infty} c_k B_n ^{k-1} \bigg\| \leq N K \sum_{k=s+1}^{\infty} |c_k |. 
\end{equation}
Inserting (\ref{bound2}) and (\ref{bound3}) into (\ref{bound1}) we get
\[ \bigg\| \sum_{k=s+1}^{\infty} c_k B_n ^k  \bigg\| \leq N (3  \| V \| + K)  \sum_{k=s+1}^{\infty} |c_k|. \]
Finally, let $s$ be large enough to guarantee that 
\begin{enumerate}
\item[1.] $\displaystyle{\bigg\| \sum_{k=s+1}^{\infty} c_k B_0 ^k \bigg\| \leq \epsilon}$,
\item[2.] $\displaystyle{\sum_{k=s+1}^{\infty} |c_k| \leq \frac{\epsilon}{N (3  \| V \| + K)}}$.
\end{enumerate}
Using the estimates in (\ref{ineq1}), we arrive at
\[  \lim_{n \to \infty} \| F(V_n) - F(V_0) \| \leq  \lim_{n \to \infty} \sum_{k=1}^{s} |c_k| \, \| B_n ^k - B_0 ^k\| + 2 \epsilon=2 \epsilon.   \]
Since $\epsilon>0$ is arbitrary, our lemma follows.
\end{proof}

\begin{lemma}\label{cont projection}
The map $\CN \subseteq \BH \to \BH$, $W \mapsto WW^{\ast}$, is continuous, where the adjoint is with respect to the $L^2$ inner product.  
\end{lemma}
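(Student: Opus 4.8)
The plan is to make $WW^{\ast}$ explicit via the rank-one decomposition of Remark \ref{form cn} and then estimate directly, exactly in the spirit of the homeomorphism lemma preceding the present one. Fix once and for all an $L^2$-orthonormal basis $\xi_1,\ldots,\xi_N$ of $S$. For $W\in\CN$ put $\phi_i:=W\xi_i\in\Hb^{1}$, so that $W=\sum_{i=1}^N\PIL{\,\cdot\,}{\xi_i}\phi_i$ by Remark \ref{form cn}. A direct computation with the $L^2$ inner product shows that the $L^2$-adjoint of $W$ is $W^{\ast}=\sum_{i=1}^N\PIL{\,\cdot\,}{\phi_i}\xi_i$, which is bounded on $\Hb^{1}$ because $\phi_i,\xi_i\in\Hb^{1}$ and $|\PIL{\,\cdot\,}{\phi_i}|\le\|\phi_i\|_{L^2}\,\|\,\cdot\,\|_{\Hb^{1}}$; consequently
\[
WW^{\ast}=\sum_{i=1}^N\PIL{\,\cdot\,}{\phi_i}\phi_i\ \in\ \BH ,
\]
which is precisely the rank-$N$ $L^2$-orthogonal projection $P$ onto $\spaen\{\phi_1,\ldots,\phi_N\}$ considered in Remark \ref{square root}.

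Next I would pass to the $N$-tuple picture. Suppose $W_n\to W_0$ in $\BH$, and set $\phi_i^{(n)}:=W_n\xi_i$. Then
\[
\|\phi_i^{(n)}-\phi_i^{(0)}\|_{\Hb^{1}}=\|(W_n-W_0)\xi_i\|_{\Hb^{1}}\le\|W_n-W_0\|\,\|\xi_i\|_{\Hb^{1}}\longrightarrow 0
\]
as $n\to\infty$, for each $i=1,\ldots,N$. Note also that $\|\phi_i^{(n)}\|_{L^2}=1$ for all $n$ because $W_n\in\CN$, while $\|\phi_i^{(0)}\|_{\Hb^{1}}$ is a fixed finite constant.

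It then remains to estimate $WW^{\ast}$ term by term. For $\xi\in\Hb^{1}$ with $\|\xi\|_{\Hb^{1}}=1$, I would insert the intermediate vector $\PIL{\xi}{\phi_i^{(n)}}\phi_i^{(0)}$ and apply Cauchy--Schwarz in $L^2$ together with $\|\,\cdot\,\|_{L^2}\le\|\,\cdot\,\|_{\Hb^{1}}$:
\begin{align*}
\bigl\|\PIL{\xi}{\phi_i^{(n)}}\phi_i^{(n)}-\PIL{\xi}{\phi_i^{(0)}}\phi_i^{(0)}\bigr\|_{\Hb^{1}}
&\le |\PIL{\xi}{\phi_i^{(n)}}|\,\|\phi_i^{(n)}-\phi_i^{(0)}\|_{\Hb^{1}}+|\PIL{\xi}{\phi_i^{(n)}-\phi_i^{(0)}}|\,\|\phi_i^{(0)}\|_{\Hb^{1}}\\
&\le \|\phi_i^{(n)}\|_{L^2}\,\|\phi_i^{(n)}-\phi_i^{(0)}\|_{\Hb^{1}}+\|\phi_i^{(n)}-\phi_i^{(0)}\|_{L^2}\,\|\phi_i^{(0)}\|_{\Hb^{1}}\\
&\le \bigl(1+\|\phi_i^{(0)}\|_{\Hb^{1}}\bigr)\,\|\phi_i^{(n)}-\phi_i^{(0)}\|_{\Hb^{1}}.
\end{align*}
Summing over $i=1,\ldots,N$ gives $\|W_nW_n^{\ast}-W_0W_0^{\ast}\|\le\sum_{i=1}^N\bigl(1+\|\phi_i^{(0)}\|_{\Hb^{1}}\bigr)\,\|\phi_i^{(n)}-\phi_i^{(0)}\|_{\Hb^{1}}\to 0$, which is the asserted continuity.

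The argument is routine; the step I expect to need the most care is the two-norm bookkeeping. The vectors $\phi_i^{(n)}$ are uniformly bounded only in the $L^2$-norm, so the Cauchy--Schwarz step must be performed in $L^2$ (consistently with the $L^2$-adjoint), after which $\|\,\cdot\,\|_{L^2}\le\|\,\cdot\,\|_{\Hb^{1}}$ is invoked both to absorb the unit test vector $\xi$ and to bound the $L^2$-differences $\phi_i^{(n)}-\phi_i^{(0)}$ by their $\Hb^{1}$-counterparts, which are the ones controlled by $\|W_n-W_0\|$.
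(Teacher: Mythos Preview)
Your proof is correct and follows essentially the same route as the paper's: write $WW^{\ast}=\sum_{i=1}^N\PIL{\,\cdot\,}{W\xi_i}W\xi_i$ via the rank-one description of Remark \ref{form cn}, telescope the difference term by term, apply Cauchy--Schwarz in $L^2$, and use $\|\,\cdot\,\|_{L^2}\le\|\,\cdot\,\|_{\Hb^{1}}$. The only cosmetic differences are that the paper works directly with two operators $V_1,V_2$ (obtaining a Lipschitz-type bound $\|V_1V_1^{\ast}-V_2V_2^{\ast}\|\le NC(C\|V_1\|+1)\|V_1-V_2\|$) rather than phrasing things sequentially, and it splits the telescoping with the intermediate term $\PIL{\xi}{V_2\xi_i}V_1\xi_i$ instead of your $\PIL{\xi}{\phi_i^{(n)}}\phi_i^{(0)}$.
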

\begin{proof}
Let $V_1, V_2 \in \CN$. Let $S$ be an $N$-dimensional subspace contained in $\Hb^1$  such that  $\| V_j \xi \|_{L^2}=\|\xi\|_{L^2}$, for $j=1,2$. Let $\xi_1 , \ldots , \xi_N$ be a basis of $S$ such that $\PIL{\xi_i}{\xi_j}=\delta_{ij}$. Set $C:=\max_{1 \leq i \leq N} \| \xi_i \|_{\Hb^{1}}$.  For any $\xi \in \Hb^{1}$,  
\begin{align*}
\| (V_1V_1 ^{\ast}  - V_2 V_2^{\ast} )\xi\|_{\Hb^{1}} & = \bigg\| \sum_{i=1}^N  \PIL{\xi}{V_1\xi_i}V_1\xi_i - \sum_{i=1}^N  \PIL{\xi}{V_1\xi_i}V_1\xi_i \bigg\|_{\Hb^{1}} \\
& \leq     \sum_{i=1}^N  |\PIL{\xi}{(V_1 - V_2)\xi_i} | \, \|V_1\xi_i\|_{\Hb^{1}} +  |\PIL{\xi}{V_2\xi_i}| \, \| (V_1 - V_2 )\xi_i\|_{\Hb^{1}} \\
& \leq NC (C\|V_1\| + 1) \| V_1 - V_2 \|,    
\end{align*}
so we have $\| V_1V_1 ^{\ast}  - V_2 V_2^{\ast} \|\leq NC (C\|V_1\| + 1) \| V_1 - V_2 \|$, and the  stated continuity now follows. 
\end{proof}

\noi     It is worth pointing out  that the following construction of a continuous local cross section is adapted from \cite{ACM05}. In this article, the authors gave a continuous local cross section for a transitive action on each connected component of the set of  partial isometries in a $C^{\ast}$-algebra. The continuity of the square root stated in Lemma \ref{sqrt continuous} provides the technical tool to adapt the construction to our setting.

\begin{proposition}\label{sections cn}
Let $V \in \CN$. Then the map
\[  \pi_{V} : \UC \longrightarrow \CN , \, \, \, \, \, \, \, \pi_{V}(U)=UV, \]
has continuous local cross sections. In particular, it is a locally trivial fibre bundle.  
\end{proposition}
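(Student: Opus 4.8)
The goal is to produce, for each $V\in\CN$, an open neighbourhood $\Oc$ of $V$ in $\CN$ together with a continuous map $\sigma:\Oc\to\UC$ satisfying $\pi_V\circ\sigma=\mathrm{id}_{\Oc}$, i.e. $\sigma(W)V=W$ for all $W\in\Oc$. The strategy, following \cite{ACM05}, is to build $\sigma(W)$ explicitly as a product of two unitary-type operators: one operator that ``rotates'' the final space of $V$ onto the final space of $W$, and one operator that corrects the behaviour on the initial space $S$. Write $P=VV^{\ast}$ and $Q=WW^{\ast}$ for the ($L^2$-orthogonal, finite rank $N$) range projections; recall from Lemma~\ref{cont projection} that $W\mapsto Q$ is continuous, and from Remark~\ref{square root} and Lemma~\ref{sqrt continuous} that the relevant square roots restrict to bounded operators on $\Hb^{1}$ and depend continuously on $W$.

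First I would define the neighbourhood $\Oc:=\{W\in\CN:\|Q-P\|_{\BL}<1\}$, which is open in $\CN$ by Lemma~\ref{cont projection} and contains $V$. On $\Oc$ the two projections $P,Q$ are ``close'' in the Halmos sense, so the operator $R_W:=QP+(I-Q)(I-P)$ is invertible on $L^2$; its unitary part in the polar decomposition, $u_W:=R_W(R_W^{\ast}R_W)^{-1/2}$, is an $L^2$-unitary that maps $\Ran P$ onto $\Ran Q$ and $(\Ran P)^{\perp_2}$ onto $(\Ran Q)^{\perp_2}$. The key point is that $R_W^{\ast}R_W = I + (\text{finite rank, range in }\Hb^1)$ and $1\notin\sigma(-(\text{that perturbation}))$, so exactly as in Remark~\ref{square root} the inverse square root $(R_W^{\ast}R_W)^{-1/2}$ is given by a norm-convergent binomial series whose non-identity part has range in $\Hb^{1}$; hence $u_W$ restricts to an element of $\GH$, and in fact of $\UC$ since it is the restriction of an $L^2$-unitary preserving $\Hb^{1}$ (Lemma~\ref{equivalence UC}(iii)). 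Continuity of $W\mapsto u_W$ as a map $\Oc\to\UC$ (norm topology, which by Theorem~\ref{topology} is the Lie group topology) follows from Lemma~\ref{cont projection}, Lemma~\ref{sqrt continuous} applied to the relevant combinations, and the fact that a norm-convergent series of finite-rank operators converges in $\BH$.

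Next, $u_W^{\ast}W$ and $V$ are two elements of $\CN$ with the \emph{same} range projection $P$ (since $u_W$ carries $\Ran Q$ back to $\Ran P$), so the ``difference'' $g_W:=(u_W^{\ast}W)V^{\ast}+ (I-P) + (\text{a correction making it invertible})$ acts as identity off $\Ran P$ and as a finite-rank perturbation of identity on $\Ran P$; concretely one checks $u_W^{\ast}W = g_W V$ with $g_W\in\UC$, $g_W$ depending continuously on $W$, and $g_W=I$ at $W=V$. Here the freedom in the initial space $S$ is handled exactly as in Remark~\ref{form cn}: writing $V=\sum\PIL{\cdot}{\xi_i}\phi_i$ and $W=\sum\PIL{\cdot}{\xi_i}\psi_i$ with $(\phi_i),(\psi_i)\in\CN$, and $u_W^{\ast}\psi_i=:\psi_i'$, the operator $g_W$ is the extension of the rotation $\phi_i\mapsto\psi_i'$ on $\mathrm{span}\{\phi_i,\psi_i'\}$ built just as in Lemma~\ref{transitive}, whose matrix entries in a fixed orthonormal basis depend continuously on the $\psi_i'$ and hence on $W$. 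Then $\sigma(W):=g_W u_W$ lies in $\UC$, satisfies $\sigma(W)V=g_W u_W V = g_W (u_W V)$; one arranges $u_W V$ to have range $\Ran Q$ and then $g_W$ to push it to $W$, giving $\pi_V(\sigma(W))=W$, and $\sigma$ is continuous with $\sigma(V)=I$. The final sentence — that $\pi_V$ is then a locally trivial fibre bundle — is immediate: the isotropy group $\UC_V=\pi_V^{-1}(V)$ is a closed subgroup, the local triviality chart over $\Oc$ is $W\mapsto(\,\sigma(W),\,\sigma(W)^{-1}U\,)$ for $U\in\pi_V^{-1}(W)$, and translating by group elements covers all of $\CN$ by transitivity (Lemma~\ref{transitive}).

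\textbf{Main obstacle.} The delicate step is not the algebra of the two-factor decomposition but the \emph{$\Hb^1$-boundedness and $\Hb^1$-continuity} of the operators $u_W$ and $g_W$: a priori these are defined via functional calculus on $L^2$, and as stressed in the proof of Lemma~\ref{sqrt continuous} the $\BH$-norm is not controlled by the $\BL$-norm, so one cannot simply invoke continuity of $x\mapsto x^{-1/2}$ on $\BL$. The resolution is precisely the mechanism already set up in Remark~\ref{square root} and Lemma~\ref{sqrt continuous}: every operator that appears is $I$ plus a finite-rank operator with range in $\Hb^1$, binomial series converge in $\BH$ once they converge in $\BL$ because multiplication by such a finite-rank operator turns $\BL$-convergence into $\BH$-convergence, and the tail estimates of Lemma~\ref{sqrt continuous} give the uniform control needed for continuity. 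So the proof amounts to checking that $R_W^\ast R_W$ and the analogous operator for $g_W$ have this ``identity plus nice finite rank'' form — which they do, by direct computation using $\Ran P,\Ran Q\subseteq\Hb^1$ — and then quoting Lemma~\ref{sqrt continuous} and Lemma~\ref{cont projection}.
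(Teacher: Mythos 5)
Your construction is essentially the paper's own proof: your $u_W=R_W(R_W^{\ast}R_W)^{-1/2}$ is precisely the intertwining operator $T=T_1+T_2$ built there from $P_1(PP_1P)^{-1/2}$ and $(I-P_1)\bigl((I-P)(I-P_1)(I-P)\bigr)^{-1/2}$, your $g_W$ is the paper's range-side correction $V_1V^{\ast}T^{\ast}+I-P_1$, and the continuity mechanism (identity plus finite rank with range in $\Hb^{1}$, via Remark \ref{square root}, Lemma \ref{cont projection} and Lemma \ref{sqrt continuous}) and the translation trick for sections at other points coincide with the paper's. The only point to tidy is that Lemma \ref{sqrt continuous} literally covers the $+1/2$ power of $(I-P)(I-Q)(I-P)$, so the $\BH$-continuity of your inverse square root $\bigl(I-(P-Q)^2\bigr)^{-1/2}$ requires rerunning its tail estimate for the $-1/2$ binomial series (keeping $\|(P-Q)^2\|_{\BL}$ bounded away from $1$ on a slightly smaller neighbourhood), which is routine.
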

\begin{proof}
Let $S$ be an $N$-dimensional subspace of  $\Hb^{1}$ such that $\| V \xi \|_{L^2}=\|\xi\|_{L^2}$.  Let $\xi_1 , \ldots , \xi_N$ be a basis of $S$ such that $\PIL{\xi_i}{\xi_j}=\delta_{ij}$. Our estimates will involve the  constant:
$$C:=\max_{1 \leq i \leq N} \| \xi_i \|_{\Hb^{1}}.$$
We first prove that $\pi_V$ has  continuous local cross section in a neighborhood of $V$. 
%Throughout the proof $\| \, \cdot \, \|$ denotes the norm in $\BH$.    
We set 
$$ r_V:=\min \bigg\{ \, 1,  \, \frac{1}{  C^2 N^2  (1+\| V \|) (1 +CN + CN\|V\| )^2} \, \bigg\}.$$ 
Then consider the following open set
\[  \mathcal{W}=\{ \, V_1 \in \CN  \,  :  \,   \| V_1 - V \| < r_V        \,  \}.   \] 
Let $VV^{\ast}=P$ and $V_1V_1 ^{\ast}=P_1$, where the adjoint is taken with respect the $L^2$ inner product. Then $P$ and $P_1$ are $L^2$-orthogonal projections of rank $N$.  If $V_1 \in \mathcal{W}$, we claim that 
\begin{equation*}
\| P - PP_1P \| < 1.
\end{equation*}
In fact, we have
\begin{align}
 \| P -PP_1P \| & \leq \| P \| \| P- P_1P \|   \leq \| V  \| \|V^{\ast}\|^2  \| V - V_1V_1^{\ast} V   \| \nonumber \\
 & = \| V \|  \| V^{\ast}\|^2 \| (I- V_1V_1^{\ast})(V- V_1 ) \|   \nonumber \\
 & \leq \|V\| \|V^{\ast}\|^2  \| I - V_1 V_1^{\ast} \| \| V - V_1 \|. \label{est a}
\end{align}  
The task is now  to  estimate each of these factors. Notice that for any $\xi \in \Hb^{1}$, $\| \xi \|_{\Hb^{1}}=1$, we obtain 
\begin{align*}
 \|V^{\ast} \xi\|_{\Hb^{1}} & =\bigg\| \sum_{i=1}^N \PIL{\xi}{V \xi_i} \xi_i \bigg\|_{\Hb^{1}}  \leq \sum_{i=1}^N | \PIL{\xi}{V\xi_i}| \,  \| \xi_i \|_{\Hb^{1}} \\
 & \leq \sum_{i=1}^N \| \xi \|_{L^2} \| V \xi_i \|_{L^2} \|\xi_i \|_{\Hb^{1}} \leq  \sum_{i=1}^N  \|\xi_i \|_{\Hb^{1}} \leq C N. 
\end{align*}
We thus get
\begin{align}\label{est no 1}
\| V^{\ast} \|  \leq CN.
\end{align}
%In a similar fashion we can find that
%\begin{align*}
%$\| V_1^{\ast}- V^{\ast} \| \leq C^2 N \| V- V_1 \| $.
%\end{align*}
The third factor in (\ref{est a}) can be bounded as follows 
\begin{align}
\| I- V_1 V_1^{\ast} \| & \leq 1 + \| V_1  \|  \| V_1^{\ast} \| \leq  1 + C N \| V_1 \|  \nonumber \\
& \leq 1 + CN( r_V + \|V\| ) \label{est no 2}  
 \leq 1 + CN + C N \|V\| ,  
\end{align}
where we use that $r_V \leq 1$. Inserting (\ref{est no 1}) and (\ref{est no 2}) into (\ref{est a}), our claim follows. Therefore the operator $PP_1P$ is invertible on $\Ran(P)$. Notice that  $PP_1P$ and its inverse $(PP_1P)^{-1}$ on $\Ran(P)$  are bounded in the $L^2$ norm due to the fact that $P$ has finite rank.   Thus the square root with respect to $L^2$ of the positive operator $(PP_1P)^{-1}$ is well defined and bounded on $L^2$.  
Set $T_1=P_1 (PP_1P)^{-1/2}$, and then notice that
\[ 
T_1^{\ast} T_1= (PP_1P)^{-1/2} P_1 (PP_1P)^{-1/2}=(PP_1P)^{-1/2}(PP_1P)(PP_1P)^{-1/2}=P, 
\]
where the adjoint of $T_1$ is considered with respect to $L^2$. Our next step is to prove that $T_1T_1^{\ast}=P_1$. To this end we check that $P_1P=T_1|P_1P|$ is actually the polar decomposition. By the uniqueness of this decomposition, it is enough to show that
\[  T_1|P_1P|=P_1 |P_1P|^{-1}|P_1P|=P_1P,  \]
and 
\begin{equation}\label{range}
\Ran(P)=\Ran(PP_1P) \subseteq \Ran(PP_1) \subseteq \Ran(P),
\end{equation}
so we have
$\Ker(T_1)=\Ker(P)=\Ran(P)^{\perp_2}=\Ran(PP_1)^{\perp_2}=\Ker(P_1P).$
Hence $T_1$ is the $L^2$-partial isometry given by the polar decomposition, and consequently, we get that $\Ran(T_1T_1^{\ast})=\Ran(P_1P)$. 

On the other hand, in the same manner as (\ref{est a})  we have 
\begin{align*}
\|P_1 - P_1PP_1   \| & \leq  \|V_1\| \|V_1^{\ast}\|^2  \| I - V V^{\ast} \| \| V - V_1 \|    \\
& \leq (r_V + \| V \|)C^2N^2 (1 + CN\|V\|) \, \| V - V_1\| \\
& \leq (1+\|V\|)C^2N^2(1 +CN\|V\|)\, \|V - V_1\| < 1.
\end{align*}
According to the preceding inequality, $P_1 - P_1PP_1$ is invertible on $\Ran (P_1)$. Then we can prove that 
$\Ran (P_1P)=\Ran(P_1)$ in the same way as in (\ref{range}) interchanging the roles of $P$ and $P_1$. From this later fact, we deduce that $T_1T_1^{\ast}=P_1$. 

In order to construct another partial isometry $T_2$ on $L^2$ such that $T_2^{\ast}T_2=I-P$ and  $T_2T_2^{\ast}=I-P_1$ we repeat the above argument with the projections $I-P$ and $I-P_1$. In fact, notice that
\begin{align*}
\| (I- P) -(I-P)(I-P_1)(I-P)\| & = \|(P_1 -P P_1)(I-P)\|  \\
& \leq \| V_1 ^{\ast} \| \| I-VV^{\ast}\|^2 \| V- V_1\|  <1  
\end{align*} 
In a similar fashion we can find that $\| (I- P_1) -(I-P_1)(I-P)(I-P_1)\| < 1.$
Then we have that $T_2=(I-P_1) (\,(I-P)(I-P_1)(I-P)\,)^{-1/2}$ is the required $L^2$-partial isometry implementing equivalence between $I-P$ and $I-P_1$.
Actually, the definition of $T_2$ needs the following remarks:
\begin{itemize}
	\item[1.] Note that the finite rank projections $P$, $P_1$ can be extended to $L^2$, then the operator $A:=(I-P)(I-P_1)(I-P)$ can also be extended to $L^2$. Apparently, $A$ is positive with respect to the $L^2$ inner product, then $A^{1/2}$ is well defined using the continuous functional calculus in $\BL$.
	\item[2.] Recall that $A$ is invertible on $(I-P)(\Hb^1)$. It follows from \cite[Theorem II]{krein98} that $\sigma(A|(I-P)(L^2))\subseteq \sigma(A|(I-P)(\Hb^{1}))$, and consequently, the extension of $A$ to $(I-P)(L^2)$ is invertible. Then $A^{1/2}((I-P)(L^2))= (I-P)(L^2)$, and  by Remark  \ref{square root} we have $A^{1/2}((I-P)(\Hb^{1}))=A^{1/2}(\Hb^{1})\subseteq  \Hb^{1}$, so we find that $A^{1/2}((I-P)(\Hb^{1}))\subseteq (I-P)(\Hb^{1})$.
	\item[3.] According to our last remark, it is possible to restrict the domain of $A^{1/2}$, and the resulting operator $A^{1/2}|_{(I-P)(\Hb^{1})}:(I-P)(\Hb^{1}) \longrightarrow (I-P)(\Hb^{1})$ is bounded with respect to the inner product of $\Hb^{1}$. Moreover,  $C:= A^{-1}|_{(I-P)(\Hb^{1})} A^{1/2} |_{(I-P)(\Hb^{1})}$, which is also continuous with respect the topology of $\Hb^{1}$, turns out to be the inverse of $A^{1/2}|_{(I-P)(\Hb^{1})}$. 
	\item[4.] Observe  that $ T_2((I-P)(\Hb^{1}))  \subseteq (I-P_1)(\Hb^{1})$ and $T_2^*((I-P_1)(\Hb^{1}))  \subseteq (I-P)(\Hb^{1})$. Since  $T_2$ is an $L^2$-partial isometry from $(I-P)(L^2)$ onto $(I-P_1)(L^2)$,  then we find that $T_2((I-P)(\Hb^{1}))=(I-P_1)(\Hb^{1})$.
\end{itemize}
 
We define $T:=T_1 + T_2$, which  is clearly an unitary operator in $\BL$. Then  we note  that $T(\,P(\Hb^{1})\,)=T_1(\,P(\Hb^{1})\,)= P_1(\Hb^{1})$ and $T(\,(I-P)(\Hb^{1})\,)=T_2(\,(I-P)(\Hb^{1})\,)= (I- P_1)(\Hb^{1})$, so we have that $T(\Hb^{1})=\Hb^{1}$.  By Lemma \ref{equivalence UC} we get $T|_{\Hb^{1}} \in \UC$.

On the other hand, we set $W=  V_1 V^{\ast} T^{\ast} + I -P_1$. Clearly, $W$ is a unitary on $L^2$ such that $W(\Hb^{1}) \subseteq \Hb^{1}$. Moreover, $W^{\ast}=T V V_1^{\ast} + I-P_1$ also satisfies $W^{\ast}(\Hb^{1}) \subseteq \Hb^{1}$. Therefore we have $W(\Hb^{1})=\Hb^{1}$,  and consequently,  $W|_{\Hb^{1}} \in \UC$.

Now we give the continuous local cross section of $\pi_V$, namely
$$  \sigma : \mathcal{W} \longrightarrow \UC, \, \, \, \, \, \, \sigma(V_1)=W|_{\Hb^{1}} \, T|_{\Hb^{1}}\,. $$
Note that for $V_1 \in \mathcal{W}$, 
\[
\sigma(V_1)V = V_1 V^{\ast} T^{\ast} T V + (I-P_1)TV = V_1 V^{\ast} P V = V_1 V^{\ast} V = V_1 V_1^{\ast}V_1 =V_1,
\]
which shows that $\sigma$ is a section for $\pi_V$. The continuity of $\sigma$  can be deduced from the following facts:
\begin{itemize}
\item[1.] According to Lemma \ref{cont projection}  the map $\CN \to \BH$, $V_1 \mapsto V_1V_1^{\ast}$, is continuous in $\BH$. 

\item[2.] The map $V_1 \mapsto (P V_1V_1^{\ast}P)^{1/2}$ is clearly continuous in $\BH$ since $P$ is a finite rank operator and the square root is continuous in $\BL$. Then the map given by $T_1(V_1)=V_1V_1^{\ast}(P V_1V_1^{\ast}P)^{-1/2}$ is continuous because taking inverses on $P(\Hb^{1})$ is continuous.

\item[3.] From Lemma \ref{sqrt continuous} we have that $A(V_1)= (\, (I- VV^{\ast})(I-V_1V_1^{\ast})(I-VV^{\ast})  \,)^{1/2} |_{\Hb^{1}}$ is continuous in $\BH$. By the third remark after the definition of $T_2$, we know that $A(V_1)$ is invertible on $(I-P)(\Hb^{1})$. Since taking inverses on $(I-P)(\Hb^{1})$ is continuous, we can conclude that
$T_2(V_1)=  (I-V_1V_1^{\ast})A(V_1)^{-1}$ is continuous with respect the norm of $\BH$.

\item[4.]  Now the continuity of $T(V_1)=T_1(V_1) + T_2(V_1)$ is a straightforward consequence of the previous facts. On the other hand, $W(V_1)=  V_1 V^{\ast} T(V_1)^{\ast} + I -V_1V_1^{\ast}$ is continuous in $\BH$ since $T(V_1) \in \UC$, which implies that
$T(V_1)^{\ast}|_{\Hb^{1}}=T(V_1)^{-1}$, and hence the desired continuity  can be deduced again of the continuity of taking inverses.

\end{itemize}
It only remains to show how one can construct a continuous section for $\pi_V$ in a neighborhood of $V_0 \in \CN$. Let $U \in \UC$ such that $UV=V_0$. Then the required section is given by
$\tilde{\sigma}: \tilde{\mathcal{W}} \longrightarrow \UC$, $\tilde{\sigma}(V_1)=U \sigma(U^{-1}V_1)$,
where 
\[  \tilde{\mathcal{W}}=\bigg\{ \, V_1 \in \CN  \,  :  \,   \| V_1 - V_0 \| < \frac{r_V}{\|U^{-1}\|}        \,  \bigg\}.  \] 
This finishes the proof, the detailed verification of this later fact is straightforward. 
\end{proof}

\medskip

\subsection{Differential structure of $\CN$}

The following result is a consequence of the implicit function theorem in Banach spaces, and it can be found in  
\cite[Proposition 1.5]{raeburn77}. 

\begin{lemma}\label{Raeburn}
Let $G$ be a Banach-Lie group acting smoothly on a Banach space $\Xc$. For a fixed $x_0 \in \Xc$, denote by $\pi_{x_0} : G \longrightarrow \Xc$ the smooth map $\pi_{x_0} (g) = g \cdot x_0$. Let $G\cdot x_0$ be the orbit of  $x_0$. Suppose that 
\begin{enumerate}
	\item[1.] $\pi_{x_0}$ is an open mapping, when regarded as a map from $G$ onto the orbit $G\cdot x_0$  (with the relative topology of $\Xc$).
	\item[2.] The differential $(d \pi_{x_0})_1 : (TG)_1 \longrightarrow \Xc$ splits: its kernel and range are closed complemented subspaces.
\end{enumerate}
Then $G\cdot x_0$ is a smooth submanifold of $\Xc$, and the map $\pi_{x_0}: G \longrightarrow G\cdot x_0$ is a smooth submersion. 
\end{lemma}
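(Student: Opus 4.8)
The plan is to establish both conclusions simultaneously by building, around each point of $G\cdot x_0$, a submanifold chart of $\Xc$ adapted to the orbit, and then reading off from the construction that $\pi_{x_0}$ is a split submersion onto $G\cdot x_0$. Write $\mathfrak{g}=(TG)_1$ and $\ell=(d\pi_{x_0})_1\colon\mathfrak{g}\to\Xc$. Using hypothesis~2, I first fix closed complemented decompositions $\mathfrak{g}=\mathfrak{k}\oplus\mathfrak{h}$ with $\mathfrak{k}=\Ker\ell$ and $\Xc=\Rc\oplus\Zc$ with $\Rc=\Ran\ell$; the open mapping theorem then makes $\ell|_{\mathfrak{h}}\colon\mathfrak{h}\to\Rc$ a topological isomorphism of Banach spaces.

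The step I expect to be the \emph{main obstacle} is the following preliminary fact: $\mathfrak{k}$ is the genuine infinitesimal stabilizer, i.e.\ $\exp(X)\cdot x_0=x_0$ for every $X\in\mathfrak{k}$ (and not merely that such $X$ produce zero first-order displacement of $x_0$). To prove it I would look at the curve $c(t)=\exp(tX)\cdot x_0=\pi_{x_0}(\exp(tX))$, which satisfies $c(0)=x_0$. Differentiating the equivariance identity $\pi_{x_0}\circ L_g=\lambda_g\circ\pi_{x_0}$ (with $L_g$ left translation by $g$ and $\lambda_g\colon y\mapsto g\cdot y$ the induced diffeomorphism of $\Xc$) at the identity yields $(d\pi_{x_0})_g=d(\lambda_g)_{x_0}\circ\ell\circ((dL_g)_1)^{-1}$; combining this with $\gamma'(t)=(dL_{\gamma(t)})_1(X)$ for $\gamma(t)=\exp(tX)$ gives $c'(t)=d(\lambda_{\exp(tX)})_{x_0}\big(\ell(X)\big)=0$ for all $t$, since $X\in\Ker\ell$. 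As $c$ is a $C^1$ curve in the Banach space $\Xc$ with vanishing derivative, it is constant, so $c\equiv x_0$. The same formula, read with $T_gG$ identified with $\mathfrak{g}$ via $(dL_g)_1$, yields the ``equivariant constant rank'' statement: for every $g$, $(d\pi_{x_0})_g$ has kernel $\mathfrak{k}$ and range $d(\lambda_g)_{x_0}(\Rc)$, both closed and complemented.

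Next I would construct the chart at $x_0$. Using the exponential map of $G$ and the splitting $\mathfrak{g}=\mathfrak{h}\oplus\mathfrak{k}$, the map $(h,k)\mapsto\exp(h)\exp(k)$ is a diffeomorphism from a neighbourhood of $0$ in $\mathfrak{h}\times\mathfrak{k}$ onto a neighbourhood of $1$ in $G$ (its differential at $0$ is the isomorphism $(h,k)\mapsto h+k$); composing with $\pi_{x_0}$ and invoking the preliminary fact, $\pi_{x_0}(\exp(h)\exp(k))=\exp(h)\cdot x_0$ depends only on $h$. Put $\theta(h)=\exp(h)\cdot x_0$ and $\Theta(h,z)=\exp(h)\cdot x_0+z$ on a neighbourhood of $0$ in $\mathfrak{h}\times\Zc$; then $\Theta$ is smooth, $\Theta(0,0)=x_0$, and $d\Theta_{(0,0)}(h,z)=\ell(h)+z$ is a topological isomorphism $\mathfrak{h}\oplus\Zc\to\Rc\oplus\Zc=\Xc$, so by the inverse function theorem in Banach spaces there are open sets $A\ni 0$ in $\mathfrak{h}$, $B\ni 0$ in $\Zc$, and $\mathcal{O}\ni x_0$ in $\Xc$ with $\Theta\colon A\times B\to\mathcal{O}$ a diffeomorphism. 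Since $\theta(h)=\Theta(h,0)$, the map $\theta\colon A\to\theta(A)$ is injective and continuous, with inverse the restriction of the smooth map $\operatorname{pr}_{\mathfrak{h}}\circ\Theta^{-1}\colon\mathcal{O}\to A$, hence a homeomorphism onto $\theta(A)$ for the relative topology of $\Xc$. This is the point where hypothesis~1 enters: shrinking $A$ so that $\exp(A)\exp(U)$ is open in $G$ for some small neighbourhood $U\ni 0$ in $\mathfrak{k}$, the openness of $\pi_{x_0}$ onto $G\cdot x_0$ makes $\pi_{x_0}(\exp(A)\exp(U))$ open in $G\cdot x_0$; but by the preliminary fact this set is exactly $\theta(A)$. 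Therefore $\theta(A)$ is open in $G\cdot x_0$ and $(\theta(A),\theta^{-1})$ is a chart.

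Finally I would assemble the atlas and extract the submersion property. For each $g\in G$, $\lambda_g(\theta(A))$ is open in $G\cdot x_0$ and $y\mapsto\theta^{-1}(g^{-1}\cdot y)$ is a chart around $g\cdot x_0$; the transition maps are composites of $\theta$, the smooth action, and $\operatorname{pr}_{\mathfrak{h}}\circ\Theta^{-1}$, hence smooth, so $G\cdot x_0$ becomes a smooth manifold modelled on $\mathfrak{h}$ whose inclusion into $\Xc$ is a smooth embedding with $T_{x_0}(G\cdot x_0)=\Rc$; by the equivariant computation $T_{g\cdot x_0}(G\cdot x_0)=d(\lambda_g)_{x_0}(\Rc)$ is complemented for all $g$, so $G\cdot x_0$ is a split submanifold of $\Xc$. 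Because the submanifold is split, the smooth map $\pi_{x_0}\colon G\to\Xc$ with image in $G\cdot x_0$ is automatically smooth as a map $G\to G\cdot x_0$, and the equivariant constant rank statement says that $(d\pi_{x_0})_g\colon T_gG\to T_{g\cdot x_0}(G\cdot x_0)$ is onto with complemented kernel for every $g$ — that is exactly the claim that $\pi_{x_0}$ is a submersion. I expect the routine verifications (smoothness of $\Theta$ and of the transition maps, the bookkeeping of the splittings) to be straightforward; the one genuinely delicate point is the preliminary fact, because without it the chart supplied by the inverse function theorem cannot be matched with an open piece of the orbit and hypothesis~1 stays unused.
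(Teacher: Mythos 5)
Your proof is correct. The paper does not prove this lemma itself — it quotes it as Proposition 1.5 of Raeburn's article, with the remark that it is a consequence of the implicit function theorem in Banach spaces — and your argument (showing $\exp(tX)\cdot x_0\equiv x_0$ for $X$ in the kernel via the vanishing-derivative/equivariance computation, the product chart $(h,k)\mapsto\exp(h)\exp(k)$, the map $\Theta(h,z)=\exp(h)\cdot x_0+z$ straightened by the inverse function theorem, and hypothesis 1 used precisely to identify $\theta(A)$ with a relatively open piece of the orbit) is exactly the standard proof along those lines.
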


\noi  Let $V \in \CN$. The isotropy group at $V$ of the above defined action is given by
\[ G_V=\{  \,   U \in \UC \, : \, UV=V \, \}. \]
Suppose that $\xi_1, \ldots , \xi_N$ is the orthonormal basis with respect to the inner product of $L^2$ of $S=\Ker(V)^{\perp_2}$.  Consider the projection $P=VV^{\ast}$, which is  given by 
\[ P: \Hb^{1} \longrightarrow \Hb^{1}, \, \, \, \, \, \, \, P(\xi)=\sum_{i=1}^N \PIL{\xi}{V\xi_i} V\xi_i\,. \]
Then we can rewrite the isotropy group as
\begin{equation}\label{isotropy}
 G_V=\{  \,   U \in \UC \, : \, UP=P \, \}.
\end{equation}
Our main result on the differential structure of $\CN$ now follows.

\begin{theorem}\label{subman}
Let $V \in \CN$. Then the map 
\[ \pi_V : \UC \longrightarrow \CN, \, \, \, \, \, \, \pi_V(U)=UV\]
is a real analytic submersion, and induces on $\CN$ a homogeneous structure. Furthermore, $\CN$ is a real analytic submanifold of $\BH$. 
\end{theorem}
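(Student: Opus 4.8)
The plan is to apply Lemma~\ref{Raeburn} with $G=\UC$, $\Xc=\BH$, and $x_0=V$, so that the orbit $\UC\cdot V$ is precisely $\CN$ by Lemma~\ref{transitive}. Two hypotheses must be verified: that $\pi_V$ is an open map onto $\CN$ with its relative topology, and that the differential $(d\pi_V)_1:\uF\longrightarrow\BH$ has closed, complemented kernel and range. Openness is immediate from Proposition~\ref{sections cn}: a map admitting continuous local cross sections is open, since a neighbourhood of $\pi_V(U_0)$ in $\CN$ is the image under $\pi_V$ of the saturation of $\sigma(\mathcal{W})$ translated by $U_0$. Once Lemma~\ref{Raeburn} applies, it yields simultaneously that $\CN$ is a real analytic submanifold of $\BH$ and that $\pi_V$ is a real analytic submersion; the homogeneous structure is then the one induced by the transitive analytic action, with isotropy the Banach-Lie subgroup $G_V$ of (\ref{isotropy}).

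The real content is the splitting condition on $(d\pi_V)_1$. Differentiating $\pi_V(e^{tX})=e^{tX}V$ at $t=0$ gives $(d\pi_V)_1(X)=XV$, so $\Ker(d\pi_V)_1=\{X\in\uF:XV=0\}=\{X\in\uF:XP=0\}$ is the Lie algebra of $G_V$, and $\Ran(d\pi_V)_1=\{XV:X\in\uF\}$. To identify a complement I would use the $L^2$-orthogonal decomposition $\Hb^{1}=S\oplus(S^{\perp_2}\cap\Hb^{1})$, i.e. $I=P\oplus(I-P)$, and correspondingly write each $X\in\uF$ as a $2\times 2$ operator matrix. Writing $X=\begin{pmatrix}X_{11}&X_{12}\\X_{21}&X_{22}\end{pmatrix}$ with respect to this decomposition, the skew-symmetry condition from Theorem~\ref{topology} forces $X_{11}^{\ast}=-X_{11}$ on the finite-dimensional space $S$, $X_{22}^{\ast}=-X_{22}$, and $X_{21}=-X_{12}^{\ast}$ (adjoints in $L^2$). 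Since $XV=0$ is equivalent to $XP=0$, i.e. to $X_{11}=0$ and $X_{21}=0$, the kernel consists of those $X$ with $X_{11}=0$, $X_{12}=0$, $X_{21}=0$, $X_{22}$ arbitrary skew. A natural closed complement of $\Ker(d\pi_V)_1$ inside $\uF$ is the set $\mathfrak{m}$ of $X\in\uF$ with $X_{22}=0$; the map $X\mapsto XV$ restricts to a bounded linear bijection from $\mathfrak{m}$ onto $\Ran(d\pi_V)_1$, which (being $\BH$-bounded with $\BH$-bounded inverse, as $P$ has finite rank) shows both that $\Ran(d\pi_V)_1$ is closed in $\BH$ and that $\Ker(d\pi_V)_1$ is complemented.

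It remains to exhibit a closed complement for $\Ran(d\pi_V)_1$ in $\BH$. An operator of the form $XV$ has range contained in $S'':=\Ran X\supseteq$ image spaces spanned by $V S$ and finitely more, and more usefully one checks that $\Ran(d\pi_V)_1\subseteq\{Y\in\BH:Y(S^{\perp_2}\cap\Hb^1)=0,\ P'YV\ \text{skew on}\ S\}$ where $P'=VV^{\ast}$; in fact I expect the precise identification $\Ran(d\pi_V)_1=\{Y\in\BH:\Ker Y\supseteq S^{\perp_2}\cap\Hb^1,\ \PIL{YV\xi}{V\eta}=-\PIL{V\xi}{YV\eta}\ \forall\xi,\eta\in S\}$, which is a closed subspace cut out by the vanishing of $Y$ on the closed complement $S^{\perp_2}\cap\Hb^1$ together with finitely many linear conditions; a complement is then furnished by operators vanishing on $S$ plus a finite-dimensional piece carrying the symmetric part of $P'YV$. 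The main obstacle is precisely pinning down this description of $\Ran(d\pi_V)_1$ and checking it is exactly (not merely contained in) the stated closed subspace — this requires verifying that every $Y$ in that subspace is of the form $XV$ for some $X\in\uF$, which amounts to solving, on the finite-dimensional block, for $X_{11}$ skew and $X_{21}$ arbitrary with prescribed $X_{11}|_S$-action and $X_{21}$-action on $VS$, and invoking Lemma~\ref{lie algebra UC} (via Krein's theorem) to guarantee the resulting $X$ extends to an element of $\uF$. Once this is done, all hypotheses of Lemma~\ref{Raeburn} hold and the theorem follows.
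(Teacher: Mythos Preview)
Your proposal follows the paper's strategy: apply Lemma~\ref{Raeburn}, obtain openness from the cross sections of Proposition~\ref{sections cn}, compute $\delta_V(X)=XV$, and complement $\mathfrak{g}_V$ in $\uF$ by $\mathfrak{h}_V=\{X\in\uF:(I-P)X(I-P)=0\}$, exactly as the paper does. The only divergence is in showing that $\Ran\delta_V$ is closed and complemented in $\BH$. The paper does not describe the range explicitly: it writes down a bounded linear map $K$ with $\delta_V\circ K\circ\delta_V=\delta_V$, so that $E:=\delta_V\circ K$ is a bounded idempotent onto $\Ran\delta_V$, settling closedness and complementation in one stroke. You instead characterise $\Ran\delta_V$ directly as the set of $Y\in\BH$ vanishing on $S^{\perp_2}\cap\Hb^1$ and satisfying $\PIL{Y\xi}{V\eta}=-\PIL{V\xi}{Y\eta}$ for $\xi,\eta\in S$ (your written condition carries a spurious extra $V$), and then build a complement by hand. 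This route is correct and more explicit; the surjectivity you flag as the ``main obstacle'' is resolved by taking $X=YV^{\ast}-VY^{\ast}(I-P)$, which is $\Hb^1$-bounded (all factors are finite rank through $V$ or $P$), lies in $\uF$ precisely because of the skew constraint on $Y$, and satisfies $XV=Y$ since $(I-P)V=0$ and $V^{\ast}V$ is the projection onto $S$. The paper's pseudo-inverse device is the standard shortcut that packages this verification into a single identity.
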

\begin{proof}
The proof consists in applying Lemma \ref{Raeburn} with $X=\BH$, $G=\UC$ and $x_0=V$. We first note that the action  $\UC \times \BH \to \BH$, $(U,X)\mapsto UX$ is an analytic map. Indeed, according to Remark \ref{inclusion smooth} we know that the inclusion map $i:\UC \hookrightarrow Gl(\Hb^{1})$ is analytic. Thus the action, which is given by multiplication in $\BH$, has to be analytic.

On the other hand, it follows at once from Proposition \ref{sections cn} that  $\pi_V$ is an  open map. The differential of $\pi_V$ at the identity is given by
\[    \delta_V:=(d\pi_V)_I : \uF \longrightarrow \BH , \, \, \, \, \, \delta( X)=XV,    \]  
The Lie algebra of the isotropy group computed in (\ref{isotropy}), which is the kernel of $\delta_V$, is given by 
\[ \mathfrak{g}_V=\{ \, X \in \uF \, : \, XP=0 \, \}.   \]   
It is clear that $\mathfrak{g}_V$ is closed in $\uF$. From Lemma \ref{lie algebra UC} $iii)$ there exists $Z \in \BL$, $Z^{\ast}=-Z$, such that $Z|_{\Hb^{1}}=X$. Then we have that $ZP=0$ implies $-PZ=(ZP)^{\ast}=0$, and consequently, $PX=0$. Thus we may represent $X$ as a $2 \times 2$ matrix with respect to the decomposition induced by $P$, that is
$$ 
X=\left(\begin{array}{cc}{0}&{0}\\{0}&{X_{22}}\end{array}\right).
$$
Therefore  $\mathfrak{g}_V$ is complemented in $\uF$. In fact, the subspace 
\[ 
\mathfrak{h}_V=\{  \,  X \in \uF \,  : \,  (I-P)X(I-P)=0 \, \}
\]
is a closed supplement of $\mathfrak{g}_V$ in $\uF$. 

It remains to prove that the range of $\delta_V$ is a closed complemented subspace of $\BH$. To this end we define the following bounded linear map:
\[  K: \BH \longrightarrow \BH, \, \, \, \, \, K(Y)=PYV^{\ast} + (I-P)YV^{\ast}. \]
Then a straightforward computation shows that  $\delta_V \circ K \circ \delta_V = \delta_V$. We thus get that $E:=\delta_V \circ K$ is a continuous idempotent onto the range of $\delta_V$, and the proof is complete.
\end{proof}

 Let $\h$ be a complex separable Hilbert space. Let $\B$ denote the algebra of bounded linear operators acting on $\h$.  Denote by 
 $\| \, \cdot \, \|$ the usual operator norm. By a \textit{symmetrically-normed ideal}  we mean a two-sided ideal $\SSF$ of $\B$  
 equipped with a norm $\| \, \cdot \, \|_{\SSF}$ satisfying
\begin{enumerate}
	\item[i)] $(\SSF , \| \, \cdot \, \|_{\SSF})$ is a Banach space,
	\item[ii)] $\|XYZ\|_{\SSF}\leq \|X\|\|Y\|_{\SSF}\|Z\|$, whenever $X,Z \in \B$ and $Y \in \SSF$,
	\item[iii)] $\|X\|_{\SSF}=\|X\|$, when $X$ has rank one.   
\end{enumerate}
%From the second assumption it can be shown that $\|X^{\ast}\|_{\SSF}=\|X\|_{\SSF}$ and $\|UXV\|_{\SSF}=\|X\|_{\SSF}$ for any unitaries operators $U,V$. 

Well-known examples of symmetrically-normed ideals are the $p$-Schatten operators $\SSF_p$ for $1\leq p \leq \infty$, where $\SSF_{\infty}$ stands for the compact operators.  More elaborated examples as well as a full treatment of symmetrically-normed ideals can be found in  
\cite{gohbergk60} or \cite{simon05}.

 \begin{corollary}\label{p norm}
Let $\mathfrak{S}$ be a symmetrically-normed ideal of $\BH$. Then $\CN$ is a real analytic submanifold of $\mathfrak{S}$. 
\end{corollary}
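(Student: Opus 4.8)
The plan is to mimic the proof of Theorem~\ref{subman} essentially verbatim, replacing the ambient Banach space $\BH$ by the symmetrically-normed ideal $\mathfrak S$ and checking that every step survives. Concretely, I would apply Lemma~\ref{Raeburn} once more, but now with $\Xc = \mathfrak S$, $G = \UC$ and $x_0 = V \in \CN$. The first thing to verify is that the action $\UC \times \mathfrak S \to \mathfrak S$, $(U, X) \mapsto UX$, is well-defined and analytic: well-definedness is immediate from property (ii) of a symmetrically-normed ideal (since $U \in \BH$ and $\mathfrak S$ is a two-sided ideal), and analyticity follows because left multiplication by a fixed operator is a bounded linear, hence analytic, map on $\mathfrak S$, depending analytically on $U$ via the analytic inclusion $\UC \hookrightarrow \GH$ of Remark~\ref{inclusion smooth}; composition of analytic maps then does the job.

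Next I would observe that $\CN \subset \mathfrak S$: every $V \in \CN$ has finite rank $N$ (it is the operator $\sum_{i=1}^N \PIL{\,\cdot\,}{\xi_i}\phi_i$ of Remark~\ref{form cn}), and by property (iii) together with (ii) every finite rank operator lies in $\mathfrak S$. Moreover, on the finite-dimensional space of rank-$\leq N$ operators supported on the fixed subspace $S$, the norm $\|\cdot\|_{\mathfrak S}$ and the operator norm $\|\cdot\|$ are equivalent — all norms on a finite-dimensional space are equivalent — so the subspace topology that $\CN$ inherits from $\mathfrak S$ coincides with the one it inherits from $\BH$. This identification of topologies is the key point that lets me recycle everything proved so far: in particular, the continuous local cross sections constructed in Proposition~\ref{sections cn} remain continuous cross sections for $\pi_V : \UC \to \CN \subset \mathfrak S$, so $\pi_V$ is again an open map onto its orbit $\CN$, which verifies hypothesis~1 of Lemma~\ref{Raeburn}.

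For hypothesis~2, I need the differential $\delta_V = (d\pi_V)_I : \uF \to \mathfrak S$, $\delta_V(X) = XV$, to have closed complemented kernel and range. The kernel is the same subspace $\mathfrak g_V = \{X \in \uF : XP = 0\}$ as in Theorem~\ref{subman}, with the same closed complement $\mathfrak h_V$ inside $\uF$ — this is intrinsic to $\UC$ and does not see the ambient space at all. For the range, I would reuse the same projection-valued formula: define $K : \mathfrak S \to \mathfrak S$ by $K(Y) = PYV^{\ast} + (I-P)YV^{\ast}$, note that $K$ maps $\mathfrak S$ into itself and is bounded in the $\mathfrak S$-norm (again by property (ii), since $P$, $V^{\ast} \in \BH$), and check the algebraic identity $\delta_V \circ K \circ \delta_V = \delta_V$ exactly as before, so that $E := \delta_V \circ K$ is a continuous idempotent on $\mathfrak S$ with range $\Ran(\delta_V)$. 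Then Lemma~\ref{Raeburn} applies and yields that $\CN = \UC \cdot V$ is a real analytic submanifold of $\mathfrak S$ with $\pi_V$ a real analytic submersion.

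I do not anticipate a serious obstacle; the only point requiring genuine (if brief) care is the claim that $\CN$ carries the \emph{same} topology as a subset of $\mathfrak S$ and of $\BH$, since the continuity arguments of Section~\ref{Stiefel} and the openness of $\pi_V$ were proved with respect to the $\BH$-norm. Once the finite-rank equivalence-of-norms remark is in place, everything else is a line-by-line transcription of Theorem~\ref{subman} with $\BH$ replaced by $\mathfrak S$, using only that $\mathfrak S$ is an ideal closed under the relevant multiplications and that $\|\cdot\|_{\mathfrak S}$ dominates nothing worse than is compatible with boundedness of the maps $K$ and left multiplication.
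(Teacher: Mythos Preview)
Your approach is essentially the paper's: apply Lemma~\ref{Raeburn} again with $\Xc=\mathfrak S$, reuse the cross sections of Proposition~\ref{sections cn}, and recycle the map $K$ to split the range of $\delta_V$. There is, however, one genuine slip in your justification that the two subspace topologies on $\CN$ agree. You write that ``the finite-dimensional space of rank-$\leq N$ operators supported on the fixed subspace $S$'' carries equivalent norms; but that space is \emph{not} finite-dimensional --- it is linearly isomorphic to $(\Hb^{1})^N$ via $\sum_i \PIL{\,\cdot\,}{\xi_i}\phi_i \mapsto (\phi_1,\dots,\phi_N)$. So the ``all norms on a finite-dimensional space'' argument does not apply.

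The correct (and equally short) fix is the paper's: for $V_1,V_2\in\CN$ the difference $V_1-V_2$ has rank at most $2N$, hence at most $2N$ nonzero singular values $s_j(V_1-V_2)$, and the axioms of a symmetrically-normed ideal give
\[
\|V_1-V_2\| \;\leq\; \|V_1-V_2\|_{\mathfrak S} \;\leq\; \sum_{j=1}^{2N} s_j(V_1-V_2) \;\leq\; 2N\,\|V_1-V_2\|.
\]
This uniform two-sided bound (with constants independent of $V_1,V_2$) is what makes the cross sections of Proposition~\ref{sections cn} continuous for the $\mathfrak S$-topology. Once you replace your finite-dimensionality sentence by this singular-value estimate, the rest of your argument goes through verbatim and matches the paper.
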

\begin{proof}
We apply again Lemma \ref{Raeburn} with $X=\mathfrak{S}$, $G=\UC$ and $V \in \CN$. Note that 
the action  $\UC \times\mathfrak{S} \to \mathfrak{S}$, $(U,X)\mapsto UX$ is an analytic map since the inclusion map 
$i:\UC \hookrightarrow Gl(\Hb^{1})$ is analytic (see Remark \ref{inclusion smooth}) and the bilinear map 
\[  \BH \times \mathfrak{S} \longrightarrow \mathfrak{S}, \, \, \, \, \, \, (X,Y) \mapsto XY \]
is bounded. In fact, this follows from the very definition of symmetrically-normed ideals, since it is assumed that $\|XY\|_{\mathfrak{S}}\leq \| X\| \|Y\|_\mathfrak{S}$.  On the other hand, we claim that the map
\[  \pi_V : \UC \longrightarrow \mathfrak{S}, \, \, \, \, \, \, \pi_V(U)=UV \]
has continuous local cross sections. To this end let $V_1, V_2 \in \CN$. Since  $V_1 - V_2$ has rank at most $2N$, there are at most $2N$ nonzero singular values $s_j(V_1 - V_2)$, $j=1, \ldots, 2N$, when one regards $V_1 - V_2$ as an operator acting on $\Hb^{1}$.  It follows that   
\[   \| V_1 - V_2 \| \leq \| V_1 - V_2 \|_{\mathfrak{S}}  \leq \sum_{j=1}^{2N} s_j(V_1 - V_2 ) \leq 2N \| V_1 - V_2 \|.  \]
 Hence the local cross sections constructed in Proposition \ref{sections cn} are continuous with respect the norm of $\mathfrak{S}$. 
Finally,  note that tangent spaces may be rewritten as
\[   
(T\CN)_V = \{ \, XV  \,    : \, X \in \uF \cap   \SSF  \, \}.   
\]
This allows us to find closed supplements using the same method as in the previous theorem, but with $\SSF$ in place of $\BH$ in the 
definition of the map $K$. 
\end{proof}

\section{Grassmann manifolds in Quantum Chemistry}
\label{Grassmann}
In the very beginning of the Introduction, the Grassmann manifold $\GN$ in Quantum Chemistry was defined as a quotient space of $\CN$, when the later was considered as a subset of $(\Hb^{1})^N$. If we think of $\CN$ as operators, the  
\textit{Grassmann manifold in Quantum Chemistry} may be defined as the quotient space with respect to the equivalence relation
\[    V_1 \sim V_2 \, \text{ if } \, V_1 U =V_2 \, \text{ for some $U \in \UC (S)$ }, \]
where $S$ is an $N$-dimensional subspace of $\Hb^{1}$ equal to the initial space of the operators in $\CN$ and $\UC (S)$ 
denotes the unitary group of $S$ with respect the $L^2$ inner product. 

\begin{remark}In the above definition of the equivalence relationship we may assume that $U \in \UC$. In fact, if $V_1U=V_2$ for some $U \in \UC$, then $\|V_1U\xi\|_{L^2}=\|V_2\xi\|_{L^2}=\|\xi\|_{L^2}=\|U\xi\|_{L^2}$, and consequently, $U\xi \in S$, whenever $\xi \in S$. We thus get $U|_S \in \UC(S)$, and $V_1 U|_S=V_2$.   
\end{remark}

%alternative picture-------------

% Let $\xi_1, \ldots ,\xi_N$ be a basis of an $N$-dimensional subspace $S$ of $\Hb^{1}$ such that $\PIL{\xi_i}{\xi_j}=\delta_{ij}$. We  identify  the unitary group of $n \times n $ matrices with the  subgroup of $\UC$ given by  
%\[  \UC_N := \{   \, U \in \UC \, :  \, U\xi=\xi ,\, \forall \, \xi \in \Hb^{1} \cap S^{\perp_2}    \,   \}.     \]  
%Let $P_S \in \BH$ be the $L^2$-orthogonal projection onto $S$. Elements in $\UC_N$ may be regarded as block operator matrices with respect to $P_S$ in $\BH$. Explicitly, we have that  $U \in \UC_N$ if and only if
%\[   U=\left(\begin{array}{cc}{U_{11}}&{0}\\{0}&{I}\end{array}\right),  \]
%where $\|U_{11} \xi\|_{L^2}=\|\xi\|_{L^2}$ for all $\xi \in S$. Also note that 
%$$U^{-1}=\left(\begin{array}{cc}{U_{11}^{\ast}}&{0}\\{0}&{I}\end{array}\right),  $$
%where the adjoint is taken with respect to the $L^2$ inner product. 

%One can define an equivalence relation on $\CN$ by
%$V \sim V_1$ if $V=V_1U$ for some $U \in \UC_N$.   
%The \textit{Grassmann manifold in Quantum Chemistry} is the quotient space
%$\CN / \sim$. On the other hand, 

\noi Let $\PN$ denote the set of rank $N$ $L^2$-orthogonal projections on $\Hb^{1}$, i.e. 
\[  \PN=\{   \, P \in \BH  \, : \, P^2=P, \, \PIL{P\xi}{\eta}=\PIL{\xi}{P\eta}, \, \forall \, \xi, \eta \in \Hb^{1}   \, \}. \]
In the sequel, we regard $\PN$ endowed with the topology inherited from $\BH$.

\begin{remark}
Note that $\PN$ can be characterized as 
\[   \PN=\bigg\{  \, \sum_{i=1}^N \PIL{\, \cdot \, }{\eta_i}\eta_i  \, : \,( \eta_1 , \ldots , \eta_N) \in \CN \, \bigg\}. \]
Actually, if  $P \in \PN$ and $\eta_1 , \ldots \, \eta_N$ is an $L^2$ orthonormal basis of $\Ran(P)$,  then  for any $\xi \in \Hb^{1}$, we have that $\xi=P\xi + (I-P)\xi=\sum_{i=1}^N \PIL{P\xi}{\eta_i}\eta_i + (I-P)\xi$. It follows
$P\xi=\sum_{i=1}^N \PIL{P\xi}{\eta_i} P\eta_i=\sum_{i=1}^N \PIL{\xi}{P\eta_i}\eta_i=\sum_{i=1}^N \PIL{\xi}{\eta_i}\eta_i$. The other inclusion is trivial. 
\end{remark}

\begin{lemma}\label{grass}
The map $$\varphi: \CN \longrightarrow \PN, \, \, \, \, \, \varphi(V)=VV^{\ast},  $$
has continuous local cross sections. In particular, $\GN$ and $\PN$ are homeomorphic.
\end{lemma}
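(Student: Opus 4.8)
The plan is to exhibit an explicit continuous local cross section for $\varphi$ around an arbitrary point $P_0 = V_0V_0^\ast \in \PN$, and then deduce the homeomorphism $\GN \cong \PN$ from the fact that $\varphi$ is surjective, continuous, and identifies exactly the fibres of the equivalence relation defining $\GN$. The key observation making the cross section available is that the hard analytic work has already been carried out: in the proof of Proposition \ref{sections cn} we produced, for $V_1 \in \CN$ sufficiently close to a fixed $V \in \CN$, a unitary $T(V_1)|_{\Hb^1} \in \UC$ (built from square roots of $(I-P)(I-P_1)(I-P)$ and $PP_1P$, all handled via Remark \ref{square root} and Lemmas \ref{sqrt continuous}, \ref{cont projection}) depending continuously on $V_1$ and satisfying $T(V_1)(\Ran P) = \Ran P_1$, $T(V_1)((I-P)(\Hb^1)) = (I-P_1)(\Hb^1)$, where $P = VV^\ast$, $P_1 = V_1V_1^\ast$. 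Everything there depended on $V_1$ only through the projection $P_1 = V_1V_1^\ast$, so $T$ is really a function of $P_1$.

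\medskip

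\noi First I would fix $P_0 = V_0V_0^\ast \in \PN$ and set $\mathcal{V}$ to be the set of $P_1 \in \PN$ with $\|P_1 - P_0\|$ small enough that the invertibility estimates in Proposition \ref{sections cn} (the inequalities forcing $\|P_0 - P_0P_1P_0\|<1$ and $\|(I-P_0)-(I-P_0)(I-P_1)(I-P_0)\|<1$) hold; this only needs smallness of $\|P_1 - P_0\|$, by the same computations as there, since $\|P_1 - P_0\| \le NC(C\|V_0\|+1)\|V_1 - V_0\|$ in one direction and the reverse bound $\|V_1V_1^\ast - V_0V_0^\ast\|$ controls things appropriately. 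For $P_1 \in \mathcal{V}$, define $T(P_1) := T_1(P_1) + T_2(P_1) \in \UC$ exactly as in Proposition \ref{sections cn}, where $T_1(P_1) = P_1(P_0P_1P_0)^{-1/2}$ restricted to $\Hb^1$ and $T_2(P_1) = (I-P_1)\big((I-P_0)(I-P_1)(I-P_0)\big)^{-1/2}$ restricted appropriately. Then set
\[
\sigma_{\GN} : \mathcal{V} \longrightarrow \CN, \qquad \sigma_{\GN}(P_1) := T(P_1)\, V_0.
\]
One checks $\sigma_{\GN}(P_1) \in \CN$ because $T(P_1) \in \UC$ and $V_0 \in \CN$ (Lemma \ref{transitive}), and that $\varphi(\sigma_{\GN}(P_1)) = T(P_1)V_0V_0^\ast T(P_1)^\ast = T(P_1)P_0T(P_1)^\ast = P_1$, the last equality because $T(P_1)$ carries $\Ran P_0$ unitarily (in $L^2$) onto $\Ran P_1$ and kills nothing relevant; this is precisely the content of $T_1T_1^\ast = P_1$ established in Proposition \ref{sections cn}. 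Continuity of $\sigma_{\GN}$ follows verbatim from the four-item continuity argument at the end of that proof: $P_1 \mapsto (P_0P_1P_0)^{1/2}$ is continuous because $P_0$ has finite rank and the square root is continuous in $\BL$; $P_1 \mapsto \big((I-P_0)(I-P_1)(I-P_0)\big)^{1/2}|_{\Hb^1}$ is continuous by Lemma \ref{sqrt continuous}; inversion on $P_0(\Hb^1)$ and on $(I-P_0)(\Hb^1)$ is continuous; hence $T_1(P_1)$, $T_2(P_1)$, $T(P_1)$ and finally $\sigma_{\GN}(P_1) = T(P_1)V_0$ are continuous in the norm of $\BH$, hence of $(\Hb^1)^N$. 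The cross section around a general $P_2 \in \PN$ is obtained by translating: pick $U \in \UC$ with $UP_0U^{-1} = P_2$ (available since $\UC$ acts transitively on $\PN$, which is immediate from Lemma \ref{transitive} applied to the operators in $\CN$), and define $\tilde\sigma_{\GN}(P_1) := U\,\sigma_{\GN}(U^{-1}P_1U)$ on a suitable neighborhood.

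\medskip

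\noi Finally I would deduce the homeomorphism. The map $\varphi$ is continuous (Lemma \ref{cont projection}) and surjective onto $\PN$. It is constant on equivalence classes: if $V_1 \sim V_2$, say $V_1 U = V_2$ with $U \in \UC(S)$, then $V_2V_2^\ast = V_1 U U^\ast V_1^\ast = V_1 V_1^\ast$ since $U$ is unitary on $S = \Ran(V_1^\ast)$ (with respect to the $L^2$ inner product), so $\varphi$ descends to a continuous bijection $\bar\varphi : \GN \to \PN$; injectivity of $\bar\varphi$ is the converse, namely that $V_1V_1^\ast = V_2V_2^\ast$ forces $V_1 \sim V_2$, which holds because then $U := V_1^\ast V_2$ (adjoints in $L^2$) is unitary from $S$ to $S$ and $V_1 U = V_1 V_1^\ast V_2 = V_2 V_2^\ast V_2 = V_2$. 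To see $\bar\varphi$ is a homeomorphism it suffices that $\varphi$ is an open map onto $\PN$ (then $\bar\varphi^{-1}$ is continuous), and openness is exactly what the continuous local cross sections give: if $O \subseteq \CN$ is open and $P_1 = \varphi(V_1) \in \varphi(O)$, then on a neighborhood $\mathcal{V}$ of $P_1$ with section $\sigma$ we have, by continuity of $\sigma$ and $\sigma(P_1)$ lying in the fibre of $P_1$, that a possibly smaller neighborhood maps into $O$ under $\sigma$ followed by the quotient — more directly, $\varphi \circ \sigma = \mathrm{id}$, so $\sigma^{-1}(O)$ is an open subset of $\mathcal{V}$ contained in $\varphi(O)$ and containing $P_1$. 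Hence $\varphi(O)$ is open, $\bar\varphi$ is a homeomorphism, and $\GN \cong \PN$.

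\medskip

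\noi The main obstacle is not the topology-chasing at the end but making sure the cross section $T(P_1)$ from Proposition \ref{sections cn} genuinely depends only on $P_1 = V_1V_1^\ast$ (so that it is well-defined as a function on a neighborhood in $\PN$) and that the restriction-to-$\Hb^1$ and inversion-on-a-finite-codimensional-subspace steps remain valid with $P_1$ ranging over $\PN$ rather than being tied to a specific $V_1$; this is where one must re-invoke Remark \ref{square root}, \cite[Theorem II]{krein98}, and Lemma \ref{sqrt continuous} carefully rather than merely citing the earlier proof. Everything else is bookkeeping.
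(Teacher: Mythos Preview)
Your proposal is correct but takes a heavier route than the paper. You build the section as $\sigma_{\GN}(P_1) = T(P_1)V_0$ with the full unitary $T = T_1 + T_2 \in \UC$ from Proposition~\ref{sections cn}; the paper instead uses only the finite-rank piece $T_1(P_1) = P_1(P_0P_1P_0)^{-1/2}$ and composes it with a \emph{fixed} $U \in \UC$ carrying the initial space $S$ onto $\Ran P_0$ (such $U$ exists by the construction in Lemma~\ref{transitive}), setting $\psi(P_1) = T_1(P_1)U$. The payoff of the paper's choice is that continuity becomes trivial: $(P_0P_1P_0)^{1/2}$ and its inverse live on the finite-dimensional space $\Ran P_0$, so no appeal to Lemma~\ref{sqrt continuous} is needed at all. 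Your version, by contrast, must justify continuity of $P_1 \mapsto \big((I-P_0)(I-P_1)(I-P_0)\big)^{1/2}|_{\Hb^1}$ as a function on $\PN$; you correctly flag this as the main obstacle, but note that Lemma~\ref{sqrt continuous} is stated and proved with domain $\CN$, not $\PN$, so you cannot simply cite it --- you would have to rerun its proof with $Q_n = P_n$ directly and check that an $L^2$-orthonormal basis of $\Ran P_n$ has uniformly bounded $\Hb^1$-norms (this does follow from $\|P_n - P_0\|_{\BH} \to 0$, but is a genuine extra step).

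There is also a small slip in your openness argument: for an arbitrary open $O \subseteq \CN$ and $P_1 = \varphi(V_1) \in \varphi(O)$, the section value $\sigma(P_1)$ lies in the fibre over $P_1$ but need not equal $V_1$ nor lie in $O$, so $\sigma^{-1}(O)$ need not contain $P_1$. The fix is to restrict to \emph{saturated} open sets $O$ (which is all that is needed for the homeomorphism $\GN \cong \PN$), since then the whole fibre $\varphi^{-1}(P_1)$ lies in $O$ and in particular $\sigma(P_1) \in O$. The paper's commutative-diagram argument with $\tilde\varphi$ and $i$ amounts to exactly this.
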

\begin{proof}
Let $P \in \PN$.  Consider the  open neighborhood of $P$ given by
\[   \mathcal{V}= \bigg\{ \, P_1 \in \PN \, : \, \| P- P_1 \| < \frac{1}{(\|P\|  + 1)^2}  \, \bigg\}.  \]
Then we note that for $P_1 \in \mathcal{V}$, 
\[  \| P - PP_1P \| = \| P (P -P_1)P \| \leq \|P \|^{2} \|P- P_1 \| < 1, \]
and 
\[  \| P_1 - P_1PP_1 \| \leq \|P_1 \|^{2} \|P- P_1 \|  \leq (\| P \| + 1)^2 \| P - P_1 \| < 1. \]
In the same fashion as the proof of Proposition \ref{sections cn} we can construct an $L^2$-partial isometry $T_1=T_1(P_1)=P_1(PP_1P)^{-1/2}$ such that $T_1^{\ast}T_1=P$ and $T_1T_1^{\ast}=P_1$. In order to modify the initial space of $T_1$, we can proceed as in the proof of Lemma \ref{transitive} to find an operator $U \in \UC$ such that $U(S)=P(\Hb^{1})$, where $S$ is the initial space of operators in $\CN$. Thus the continuous map $\psi: \mathcal{V} \longrightarrow \CN$, $\psi(P_1)=T_1 (P_1)U$, is the required section for $\varphi$.
 
 Next we consider the following commutative diagram
\begin{displaymath}
\xymatrix{
\CN \ar[r]^{\tilde{\varphi}} \ar[dr]_{\varphi} & \GN \ar[d]^{i}    \\
          & \PN  ,          }
\end{displaymath}
where $\tilde{\varphi}(V)=[V]$ and $i([V])=VV^{\ast}$. Notice that  $\varphi(V)=VV^{\ast}=V_1V_1^{\ast}= \varphi(V_1)$ implies $V=VV^{\ast}V=V_1V_1 ^{\ast} V$. Then we have that 
\[  U=\left(\begin{array}{cc}{V_1 ^{\ast} V}&{0}\\{0}&{I}\end{array}\right) \] 
satisfies $V=V_1 U$. So we obtain $[V]=[V_1]$. Therefore $i$ is a bijection. Moreover, $i$ is continuous: let $\mathcal{W}$ an open set in $\PN$, then $i^{-1}(\mathcal{W})$ is open if and only if $\varphi^{-1}(\mathcal{W})$ is open, which follows from the continuity of $\varphi$. Finally, the fact that $i^{-1}$ is continuous is a consequence of the existence of  continuous local cross sections for $\varphi$.  
\end{proof}

\begin{notationnf}By the above result, we will use the symbol $\GN$ to indicate any of the possible presentations of the Grassmann manifold in Quantum Chemistry, i.e.  as a quotient space or rank $N$ $L^2$-orthogonal projections.  
\end{notationnf}

\medskip

\subsection{Differential structure of $\GN$}

In this section we use the previous results on $\CN$ to study   the differential structure of $\GN$.  First we  define  an action of the Banach-Lie group $\UC$  on $\GN$  by
$$\UC \times \GN \longrightarrow \GN, \, \, \, \, \, \, \,  U \cdot P=UPU^{-1}.$$ 

\begin{remark}\label{action trans p}
Note that this action is transitive: let $P,P_1 \in \GN$. By Lemma \ref{grass} there are $V,V_1  \in \CN$ such that $VV^{\ast}=P$ and $V_1V_1^{\ast}=P_1$. Applying  Lemma \ref{transitive} we get $U \in \UC$ such that $UV=V_1$. Let $W \in \UC(L^2)$ be the extension of $U$ to all $L^2$.   Then we have that $WPW^{\ast}=WV(WV)^{\ast}=V_1 V_1 ^{\ast}=P_1$ on $L^2$, but this yields $UPU^{-1}=P_1$ when one restricts the operators to $\Hb^{1}$.                                                     
\end{remark}

\begin{lemma}\label{section gn}
Let $P \in \GN$. The map
\[  \pi_P : \UC \longrightarrow \GN, \, \, \, \, \, \, \, \pi_P(U)=UPU^{-1}, \]
has continuous local cross sections. In particular, it is locally trivial fiber bundle.  
\end{lemma}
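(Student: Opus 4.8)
The plan is to reduce the statement about $\pi_P$ to the already-proven Proposition \ref{sections cn} concerning $\pi_V$, using the homeomorphism $\varphi : \CN \to \PN$ from Lemma \ref{grass}. First I would fix $P \in \GN$ and choose, via Lemma \ref{grass}, an element $V \in \CN$ with $\varphi(V) = VV^{\ast} = P$. Then I would observe that the two action maps are intertwined by $\varphi$: for all $U \in \UC$,
\[
\varphi(\pi_V(U)) = \varphi(UV) = (UV)(UV)^{\ast} = U(VV^{\ast})U^{-1} = \pi_P(U),
\]
where one uses that $U^{\ast} = U^{-1}$ in the $L^2$ sense (Lemma \ref{equivalence UC} iv), after passing to the unitary extension $W$ on $L^2$ as in Remark \ref{action trans p}). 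So $\pi_P = \varphi \circ \pi_V$.

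Next I would transport a local cross section. Given $P_1$ near $P$, the homeomorphism $\varphi$ maps a neighbourhood of $V$ onto a neighbourhood of $P$; let $\psi$ be the continuous local cross section of $\varphi$ near $P$ produced in Lemma \ref{grass}, so $\varphi(\psi(P_1)) = P_1$ and $\psi(P) \in \CN$. Since $\pi_V$ and $\pi_{\psi(P)}$ differ by right translation by a fixed element of $\UC$ (any $U_0 \in \UC$ with $U_0 V = \psi(P)$ exists by Lemma \ref{transitive}), and $\pi_{\psi(P)}$ has continuous local cross sections near $\psi(P)$ by Proposition \ref{sections cn}, I can compose: if $\rho$ is a continuous local cross section of $\pi_{\psi(P)}$ defined on a neighbourhood of $\psi(P)$, then
\[
\sigma_P := \rho \circ \psi
\]
is defined on a neighbourhood of $P$ in $\GN$, is continuous, and satisfies $\pi_P(\sigma_P(P_1)) = \varphi(\pi_{\psi(P)}(\rho(\psi(P_1)))) = \varphi(\psi(P_1)) = P_1$. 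Shrinking the neighbourhood so that $\psi$ lands inside the domain of $\rho$ is routine. This gives continuous local cross sections for $\pi_P$ in a neighbourhood of $P$; since $\UC$ acts transitively on $\GN$ (Remark \ref{action trans p}), the same trick of pre- and post-composing with the action of a fixed group element — exactly as in the last paragraph of the proof of Proposition \ref{sections cn} — yields continuous local cross sections near an arbitrary point, and the ``locally trivial fibre bundle'' conclusion follows by the standard fact that a continuous surjection admitting local cross sections with respect to a topological group action is a locally trivial principal-type bundle.

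The main obstacle, such as it is, is bookkeeping: one must be careful that the two notions of adjoint/inverse (on $\Hb^1$ versus on $L^2$) are being used consistently when writing $\varphi(UV) = U(VV^{\ast})U^{-1}$, which is precisely the content of Remark \ref{action trans p} and must be invoked rather than asserted; and one must check that $\varphi$ being a homeomorphism (not merely continuous) is genuinely used — it is needed so that ``$P_1$ near $P$'' corresponds to ``$\psi(P_1)$ near $\psi(P)$,'' placing $\psi(P_1)$ inside the domain of the cross section $\rho$. Everything else is a direct transcription of the argument already carried out for $\CN$. Alternatively, and perhaps more cleanly, one could bypass Proposition \ref{sections cn} entirely and build the section directly: using Lemma \ref{grass}'s section $\psi$ and the unitary $T_1(P_1) = P_1(PP_1P)^{-1/2}$ together with a fixed $U$ adjusting the initial space, set $\sigma_P(P_1) := T|_{\Hb^1}$ for the appropriate unitary $T$ built from $T_1(P_1)$ and its orthogonal-complement analogue, exactly as in the proof of Proposition \ref{sections cn}; but reusing $\pi_V = \varphi^{-1}$-transport is shorter and less error-prone.
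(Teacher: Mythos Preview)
Your proposal is correct and follows essentially the same approach as the paper: compose the local section $\psi$ of $\varphi$ from Lemma~\ref{grass} with the local section of $\pi_V$ from Proposition~\ref{sections cn}, then verify the composite is a section for $\pi_P$ using the intertwining $\pi_P = \varphi \circ \pi_V$ (justified via the $L^2$ unitary extension, as you note). The paper's version is marginally cleaner only in that it sets $V := \psi(P)$ from the outset, so the detour through an arbitrary $V$, the right-translation remark, and the switch to $\pi_{\psi(P)}$ all become unnecessary; your intertwining argument applies verbatim to any preimage of $P$ under $\varphi$, so you could simply take $V = \psi(P)$ and drop that paragraph.
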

\begin{proof}
Let $P,P_1 \in \GN$. Set $V=\psi (P)$, where $\psi$ is the continuous local cross section in the proof of Lemma \ref{grass}. According to Proposition \ref{sections cn}  there exists an open neighborhood $\mathcal{W}$ of $V$ and a continuous map $\sigma: \mathcal{W} \longrightarrow \UC$ such that $\sigma(V_1)V=V_1$ for all $V_1 \in \mathcal{W}$. Then  we choose $r >0$ to ensure that $ \psi(P_1) \in \mathcal{W}$ whenever $\|P_1 - P\|<r$, and we set
\[  \phi: \{  \, P_1  \in \GN\,: \, \| P_1 - P\| < r \,  \} \longrightarrow \UC, \, \, \, \, \, \, \phi(P_1)=(\sigma \circ \psi )(P_1). \]
Clearly,  $\phi$  is a continuous map. Note that we can extend the operators of the range of $\sigma$ to obtain unitary operators on $L^2$  satisfying
\[  \pi_P(\phi(P_1))=\phi(P_1)P\phi(P_1)^{\ast}=\sigma(\psi(P_1))V (\sigma(\psi(P_1))V)^{\ast}=\psi(P_1)\psi(P_1)^{\ast}=P_1\,.  \]
If we restrict the above equation to $\Hb^{1}$, we find that $\phi$ is a section for $\pi_P$, and this ends the proof.      
\end{proof}

\begin{remark}
Let $P \in \GN$. The isotropy group of the action of $\UC$ on $\GN$ is given by
\[ G_P=\{  \,   U \in \UC \, : \,  U P=PU    \,  \}.   \]
Operators in $G_P$  can be regarded as block diagonal operators with respect the projection $P$, i.e. 
\[    U=\left(\begin{array}{cc}{U_{11}}&{0}\\{0}&{U_{22}}\end{array}\right).  \]
 Then the Lie algebra of $G_P$ is given
\[ \mathfrak{g}_P = \{  \,  X \in \uF    \, : \, XP=PX  \, \},  \]
or in terms of block matrices any $X \in \mathfrak{g}_P$ is of the form
\[    X=\left(\begin{array}{cc}{X_{11}}&{0}\\{0}&{X_{22}}\end{array}\right).  \]  
\end{remark}

\begin{theorem}
Let $P \in \GN$. Then the map 
\[ \pi_P : \UC \longrightarrow \GN, \, \, \, \, \, \, \pi_P(U)=UPU^{-1}\]
is a real analytic submersion, and induces on $\GN$ a homogeneous structure. Furthermore, $\GN$ is a real analytic submanifold of $\BH$. 
\end{theorem}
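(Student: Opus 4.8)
The plan is to invoke Lemma \ref{Raeburn} (Raeburn's criterion) exactly as in Theorem \ref{subman}, but now with $G = \UC$ acting on $\Xc = \BH$ via $U \cdot P = UPU^{-1}$ and with base point $P \in \GN \subseteq \BH$. Three things must be checked: that the action is analytic, that $\pi_P$ is an open map onto $\GN$, and that the differential $\delta_P := (d\pi_P)_I$ has closed, complemented kernel and closed, complemented range. The analyticity of the action follows from Remark \ref{inclusion smooth}: the inclusion $\UC \hookrightarrow Gl(\Hb^1)$ is analytic, inversion in $Gl(\Hb^1)$ is analytic, and multiplication in $\BH$ is bilinear and bounded, so $(U,P) \mapsto UPU^{-1}$ is a composition of analytic maps. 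Openness of $\pi_P$ onto $\GN$ is immediate from Lemma \ref{section gn}, where continuous local cross sections for $\pi_P$ were constructed.

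It remains to analyze $\delta_P$. Differentiating $\pi_P(e^{tX}) = e^{tX} P e^{-tX}$ at $t = 0$ gives $\delta_P(X) = XP - PX = [X,P]$ for $X \in \uF$. Its kernel is $\mathfrak{g}_P = \{ X \in \uF : XP = PX \}$, which is closed in $\uF$ since it is cut out by a continuous condition. For the complement, write every $X \in \uF$ as a $2 \times 2$ block matrix with respect to the decomposition $\Hb^1 = P(\Hb^1) \oplus (I-P)(\Hb^1)$ (this is a topological direct sum, both summands being closed, the first finite-dimensional); using Lemma \ref{lie algebra UC}, $X$ extends to a skew-hermitian $Z$ on $L^2$, so the $L^2$-adjoint identities force the block structure to be genuinely self-consistent. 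Then $\mathfrak{g}_P$ consists of the block-diagonal elements, and the candidate supplement
\[
\mathfrak{h}_P = \{ \, X \in \uF \, : \, PXP = 0 \text{ and } (I-P)X(I-P) = 0 \, \}
\]
(the block-off-diagonal skew-hermitian operators) is a closed subspace with $\uF = \mathfrak{g}_P \oplus \mathfrak{h}_P$, the projections onto the two summands being $X \mapsto PXP + (I-P)X(I-P)$ and $X \mapsto PX(I-P) + (I-P)XP$, which are bounded and preserve $\uF$ (one checks they send skew-hermitian operators to skew-hermitian operators). Finally, the range of $\delta_P$ is the space of operators of the form $[X,P]$; I would exhibit a bounded idempotent of $\BH$ onto it, mimicking the map $K$ from Theorem \ref{subman}: for instance $E(Y) = PY(I-P) + (I-P)YP$ composed appropriately, or more directly observe that $\Ran(\delta_P) = \{ Y \in \BH : PYP = 0, (I-P)Y(I-P) = 0, \text{ and the off-diagonal blocks match a skew-hermitian } X \}$ is the image of the bounded idempotent $Y \mapsto PY(I-P) + (I-P)YP$ followed by the solution operator of $[X,P] = Y$ on block-off-diagonal $X$. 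One then concludes by Lemma \ref{Raeburn} that $\GN$ is an analytic submanifold of $\BH$ and $\pi_P$ an analytic submersion; the homogeneous structure is the transitive action of Remark \ref{action trans p}.

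Alternatively — and this is probably the cleanest route — I would deduce the theorem directly from the results already proved for $\CN$ together with Lemma \ref{grass}: since $\varphi : \CN \to \GN = \PN$, $\varphi(V) = VV^\ast$, has continuous local cross sections and $\CN$ is an analytic submanifold of $\BH$ (Theorem \ref{subman}), and since $\varphi$ intertwines the $\UC$-actions ($\varphi(UV) = UVV^\ast U^{-1} = U\varphi(V)U^{-1}$ after extending $U$ to $L^2$, as in Remark \ref{action trans p}), one can push the differential structure forward along $\varphi$; the fibers of $\varphi$ are the orbits of the compact-type group $\UC(S)$, and the cross sections give analytic, not merely continuous, local trivializations once one upgrades the square-root construction in Lemma \ref{grass} using analyticity of the holomorphic functional calculus (the binomial series argument of Remark \ref{square root} is in fact analytic in the relevant operator).

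\textbf{Main obstacle.} The delicate point, in either approach, is verifying that the range of $\delta_P$ is \emph{closed} and complemented in $\BH$ — equivalently, producing the bounded idempotent $E$ with $\Ran E = \Ran \delta_P$. The subtlety is the same one that pervades the whole paper: operators live in $\BH$ but the skew-hermitian condition is with respect to the $L^2$ inner product, so one must be careful that solving $[X,P] = Y$ for the off-diagonal blocks of $X$ produces an operator that is bounded on $\Hb^1$ (not merely on $L^2$) and skew-hermitian in the $L^2$ sense; since $P$ has finite rank this works out, but it is exactly where Lemma \ref{lie algebra UC} and the finite-rank nature of $P$ must be used, rather than taken for granted.
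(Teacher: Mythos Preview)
Your proposal follows the paper's proof essentially step for step: Raeburn's criterion (Lemma~\ref{Raeburn}) applied to the conjugation action, analyticity via Remark~\ref{inclusion smooth}, openness via Lemma~\ref{section gn}, and the kernel complement $\mathfrak{h}_P$ given by the off-diagonal block matrices. The one substantive difference is in how the range of $\delta_P$ is shown to be complemented. The paper does not build a map $K$ as in Theorem~\ref{subman}; instead it invokes the algebraic identity $\delta_P \circ \delta_P \circ \delta_P = \delta_P$ (citing \cite{AL08}), so that $E := \delta_P \circ \delta_P$ is automatically a bounded idempotent onto the range. Your candidate $Y \mapsto PY(I-P) + (I-P)YP$ is exactly this $E = \delta_P^2$ written in block form, so you reached the same object without recognizing the clean identity that produces it in one line. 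Your ``main obstacle'' paragraph is a fair worry, but the paper simply does not dwell on it; the finite rank of $P$ is what makes the $L^2$-adjoint computations land back in $\BH$, and the paper takes this for granted. Your alternative route---pushing the manifold structure forward from $\CN$ along $\varphi$---is plausible but is not the route the paper takes.
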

\begin{proof}
Note that the action of $\UC$ on $\BH$ given by $(U,X)\mapsto UXU^{-1}$ is analytic essentially due to the fact  that the inclusion map $\UC \hookrightarrow \GH$ is analytic, which was pointed out in Remark \ref{inclusion smooth}. According to Lemma \ref{section gn}  the map $\pi_P: \UC \longrightarrow \GN$, $\pi_P(U)=UPU^{-1}$  is open. Its differential at the identity is given by
\[   \delta_P:=(d\pi_P)_I: \uF \longrightarrow \BH, \, \, \, \, \, \, \, \delta_P(X)=XP-PX. \] 
Note that  kernel of $\delta_P$ is a closed complemented subspace of $\uF$. In fact, a closed supplement is given by the co-diagonal block matrices, i.e.
\[  \mathfrak{h}_P= \{ \,  X \in \uF \, : \, PXP=(I-P)X(I-P)=0 \,    \}.   \] 
On the other hand, the range of $\delta_P$ is also a closed complemented subspace of $\BH$. To this end we remark that the argument given in 
\cite{AL08} still works in our setting. Indeed, it can be showed that $\delta_P \circ \delta_P \circ \delta_P =\delta_P$, and hence 
$E:=\delta_P \circ \delta_P$ is a continuous idempotent onto the range of $\delta_P$. Then we can apply Lemma  \ref{Raeburn}, and 
the proof is complete.  
\end{proof}

\noi The following result can be drawn in much the same way as  Corollary \ref{p norm}. 

\begin{corollary}\label{p norm grassmannian}
Let $\mathfrak{S}$ be a symmetrically-normed ideal of $\BH$. Then $\GN$ is a real analytic submanifold of $\mathfrak{S}$. 
\end{corollary}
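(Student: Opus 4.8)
The plan is to follow the scheme of Corollary \ref{p norm} essentially verbatim, using the already-established structure for $\GN$ in place of the one for $\CN$. That is, I would apply Lemma \ref{Raeburn} with $\Xc=\mathfrak{S}$, $G=\UC$, and base point $P\in\GN$, for the action $U\cdot X=UXU^{-1}$. First I would check that this action of $\UC$ on $\mathfrak{S}$ is analytic: the inclusion $\UC\hookrightarrow Gl(\Hb^{1})$ is analytic by Remark \ref{inclusion smooth}, and the map $(X,Y,Z)\mapsto XYZ$ from $\BH\times\mathfrak{S}\times\BH$ to $\mathfrak{S}$ is bounded multilinear by property (ii) of symmetrically-normed ideals (applied with $X=U$, $Z=U^{-1}$); composing with inversion $U\mapsto U^{-1}$, which is analytic on the Banach-Lie group $\UC$, gives analyticity of $(U,X)\mapsto UXU^{-1}$.

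Next I would verify hypothesis 1 of Lemma \ref{Raeburn}, openness of $\pi_P:\UC\to\GN\subseteq\mathfrak{S}$. As in Corollary \ref{p norm}, the key observation is that for $P_1,P_2\in\GN$ the difference $P_1-P_2$ has rank at most $2N$, so
\[
\|P_1-P_2\|\ \leq\ \|P_1-P_2\|_{\mathfrak{S}}\ \leq\ \sum_{j=1}^{2N}s_j(P_1-P_2)\ \leq\ 2N\,\|P_1-P_2\|,
\]
whence the topology on $\GN$ inherited from $\mathfrak{S}$ coincides with the one inherited from $\BH$. Consequently the continuous local cross sections for $\pi_P$ constructed in Lemma \ref{section gn} remain continuous with respect to the norm of $\mathfrak{S}$, which gives openness of $\pi_P$ onto its image.

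For hypothesis 2, I would examine the differential $\delta_P:\uF\to\mathfrak{S}$, $\delta_P(X)=XP-PX$. Since $X$ is bounded on $\Hb^{1}$ and $P$ has finite rank, $XP-PX$ has rank at most $2N$ and so lies in $\mathfrak{S}$; thus $\delta_P$ does map into $\mathfrak{S}$, with image consisting precisely of the finite-rank co-diagonal operators $(T\GN)_P=\{XP-PX:X\in\uF\}$, which are all contained in $\mathfrak{S}$. The kernel of $\delta_P$ is $\mathfrak{g}_P$ with the closed supplement $\mathfrak{h}_P$ already exhibited in the $\BH$-proof (these are conditions internal to $\uF$ and unaffected by changing the target space). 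For the range, the same idempotent works: $E:=\delta_P\circ\delta_P$ satisfies $\delta_P\circ\delta_P\circ\delta_P=\delta_P$, so $E$ is a continuous projection of $\mathfrak{S}$ onto $\Ran(\delta_P)$ — one only needs that $\delta_P:\mathfrak{S}\to\mathfrak{S}$ is bounded, which follows again from property (ii) since $\delta_P(Y)=PY-YP$. Lemma \ref{Raeburn} then yields that $\GN$ is a real analytic submanifold of $\mathfrak{S}$ and $\pi_P$ a real analytic submersion.

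I do not expect a genuine obstacle here; the one point requiring slight care is confirming that all the relevant operators ($\delta_P(X)$ for $X\in\uF$, and $E$ restricted to $\mathfrak{S}$) actually land in $\mathfrak{S}$ and act boundedly there — but this is immediate from finite-rankness of $P$ together with the ideal property, exactly as in Corollary \ref{p norm}. So the proof is genuinely "the same as Corollary \ref{p norm}, mutatis mutandis," and I would state it that briefly, spelling out only the rank-$\leq 2N$ estimate and the idempotent $E=\delta_P\circ\delta_P$.
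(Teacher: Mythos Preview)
Your proposal is correct and follows exactly the approach the paper intends: the paper's proof simply reads ``can be drawn in much the same way as Corollary~\ref{p norm},'' and you have faithfully carried out that translation, using the conjugation action, the rank-$\leq 2N$ equivalence of norms on $\GN$ to transport the local sections of Lemma~\ref{section gn}, and the idempotent $E=\delta_P\circ\delta_P$ to split the range. The only cosmetic slip is a sign in the last displayed formula for $\delta_P(Y)$, which is irrelevant to the argument.
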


\section{Finsler structures for the Stiefel and Grassmann manifolds in Quantum Chemistry}
\label{Finsler-Riemann}
As a straightforward application of the preceding results, we show that the Stiefel and Grassmann manifolds in Quantum Chemistry are complete Finsler manifolds. Furthermore, there is a natural Riemannian metric for these manifolds induced by the Hilbert-Schmidt inner product.  The motivation for including these consequences is that Finsler and Riemannian manifolds provide a quite natural framework in critical point theory (see e.g. \cite{deimling85}, \cite{gh93}).

 Since the notion of Finsler manifolds is not uniform in the literature, we  mention that we use the definition  introduced by  Palais 
 \cite{palais66}. Let $M$ be  a $C^1$  manifold  modeled on a Banach space $\Xc$ with tangent bundle $TM$. A  \textit{Finsler structure} for $M$ is a  function $ \| \, \cdot \| : TM \longrightarrow \RR$ such that 
 \begin{itemize}
	\item[i)] for each $m \in M$, $w \in (TM)_m$, the function $(x,w) \mapsto \|w\|_m:=\| (m,w) \|$ is an admissible norm on $(TM)_m$,
	\item[ii)] for each $m_0 \in M$, $(\mathcal{W},\Phi)$ a chart of $M$ with $m_0 \in \mathcal{W}$ and $k>1$, there is an open neighborhood  $\mathcal{W}_{m_0}$ of $m_0$ in $\mathcal{W}$ satisfying
\[    
\frac{1}{k}\| d\Phi^{-1}_{\Phi(m)}  (v)\|_m  \leq  \| d\Phi^{-1}_{\Phi(m_0)} (v)\|_{m_0} \leq  k\| d\Phi^{-1}_{\Phi(m)} (v)\|_m 
\]    
for all $m \in \mathcal{W}_{m_0}$ and all $v \in \Xc$. 
\end{itemize}
  A \textit{Finsler manifold} is a $C^1$ Banach manifold together with a Finsler structure. If $\gamma(t)$, $t \in [0,1]$, is a $C^1$ curve in $M$, its length is defined by
\[  L(\gamma)=\int_0 ^1 \| \dot{\gamma}(t)\|_{\gamma(t)} \, dt. \]
On each connected component of $M$, there is a well defined metric given by
\[   d(m_0,m_1)= \inf \{ \, L(\gamma)  \,  :   \gamma \subseteq M , \,  \gamma(0)=m_0 ,\, \gamma(1)=m_1    \,  \}, \]
where the curves considered are $C^1$. 
Furthermore, it turns out that the topology defined by this metric $d$ coincides with the  manifold topology  of $M$.   We refer the reader to  \cite{deimling85} or \cite{palais66} for 
the proof of these facts. If $(M,d)$ is a complete metric space, then $M$ is called a \textit{complete Finsler manifold}.

\begin{remark}
\label{complete Finsler}
An example of a complete Finsler manifold is a closed $C^1$ submanifold $M$ of a Banach space $\Xc$ endowed with the norm induced by the inclusion $(TM)_m \subseteq (T\Xc)_m \simeq \Xc$ (\cite[Theorem 3.6]{palais66}).
\end{remark} 
 
Let $V \in \CN$. Recall that the map $\pi_V$ is a submersion and, therefore, the tangent space of $\CN$ at $V$ may be identified with
\[  (T\CN)_{V}=\{ \, XV \, : \, X \in \uF \, \}. \] 
Let $P \in \GN$. Analogously,  we may identify the tangent space of $\GN$ at $P$ with
\[   (T\GN)_{P}=\{ \,  XP-PX  \, :  \, X \in \uF \,   \}. \]  

\begin{corollary}
Let $\SSF$ be a symmetrically-normed ideal of $\BH$. The following assertions hold:
\begin{itemize}
	\item[i)] $\CN$ is a complete Finsler manifold with  structure given by
	\[  \| XV \|_V :=  \|  XV \|_{\SSF}, \, \, \, \, \, \, \, \, XV \in (T\CN)_V.  \]
	\item[ii)] $\GN$ is a complete Finsler manifold with structure given by
	\[  \| XP -PX \|_P :=  \|  XP-PX \|_{\SSF}, \, \, \, \, \, \, \, \, XP-PX \in (T\GN)_P.  \]
\end{itemize}
\end{corollary}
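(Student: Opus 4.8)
The plan is to derive both assertions from Remark~\ref{complete Finsler}, which states that a closed $C^1$ submanifold $M$ of a Banach space $\Xc$, equipped with the norm induced by the inclusion $(TM)_m \subseteq (T\Xc)_m \simeq \Xc$, is automatically a complete Finsler manifold. In our situation the candidate ambient Banach space is the symmetrically-normed ideal $\SSF$, and the submanifold structures on $\CN$ and $\GN$ inside $\SSF$ have already been obtained in Corollary~\ref{p norm} and Corollary~\ref{p norm grassmannian} respectively. The tangent space identifications $(T\CN)_V=\{XV : X \in \uF\}$ and $(T\GN)_P=\{XP-PX : X \in \uF\}$ (with the obvious intersection with $\SSF$, which is immaterial since these operators have rank $\leq 2N$ and hence lie in every symmetrically-normed ideal) show that the proposed Finsler norms $\|XV\|_V := \|XV\|_{\SSF}$ and $\|XP-PX\|_P := \|XP-PX\|_{\SSF}$ are precisely the norms induced by the inclusion of the tangent spaces into $\SSF$. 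So the only remaining point is to verify that $\CN$ and $\GN$ are \emph{closed} subsets of $\SSF$.

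For the closedness, I would argue directly. Suppose $V_n \in \CN$ and $V_n \to W$ in $\SSF$. Since $\|\cdot\| \leq \|\cdot\|_{\SSF}$ on $\SSF$ (by property (ii) of a symmetrically-normed ideal, $\|V_n - W\| \leq \|V_n - W\|_{\SSF}$), convergence also holds in the operator norm of $\BH$; and since each $V_n$ has rank $\leq N$, so does $W$, hence $W \in \SSF$ automatically and the question is purely whether $W \in \CN$. Fix the common initial space $S$ with $L^2$-orthonormal basis $\xi_1,\dots,\xi_N$. Then $W\xi_i = \lim_n V_n\xi_i$ in $\Hb^1$, hence in $L^2$, so $\PIL{W\xi_i}{W\xi_j} = \lim_n \PIL{V_n\xi_i}{V_n\xi_j} = \delta_{ij}$; this shows $W$ is $L^2$-isometric on $S$. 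One also needs $\Ker(W)^{\perp_2}=S$: from $V_n\xi = V_nV_n^\ast V_n\xi$ and the operator-norm convergence we get, passing to the limit, that $W$ vanishes on $S^{\perp_2}\cap\Hb^1$ and, combined with isometricity on $S$, that $\Ker(W)^{\perp_2}=S$ exactly. Hence $W \in \CN$ and $\CN$ is closed in $\SSF$. For $\GN = \PN$, the same scheme applies: if $P_n \in \PN$ and $P_n \to P$ in $\SSF$, then $P_n \to P$ in the operator norm of $\BH$, so $P^2 = P$ and $\PIL{P\xi}{\eta}=\PIL{\xi}{P\eta}$ pass to the limit, while $P$ has rank $N$ because rank is preserved under operator-norm limits of idempotents of fixed finite rank (the $P_n$ and $P$ being uniformly close, $P_n P$ restricted to $\Ran(P)$ is eventually invertible, forcing $\operatorname{rank} P \leq \operatorname{rank} P_n$, and symmetrically). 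Thus $P \in \PN$ and $\PN$ is closed in $\SSF$.

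With closedness in hand, Remark~\ref{complete Finsler} applies verbatim to both $\CN$ and $\GN$ as closed $C^1$ (indeed real analytic) submanifolds of $\SSF$, yielding that each is a complete Finsler manifold with exactly the stated structures. I expect the main obstacle to be the rank-stability and kernel bookkeeping in the closedness arguments — in particular, making sure that an $\SSF$-limit of elements of $\CN$ not only has the right metric relations on the fixed initial space but also has initial space \emph{equal to} $S$ rather than merely contained in it; the uniform finite-rank bound, which brings operator-norm convergence into play and lets one invoke the perturbation-of-idempotents argument already used in Proposition~\ref{sections cn}, is what makes this tractable. Everything else is a direct citation of the already-established submanifold structure and tangent-space descriptions.
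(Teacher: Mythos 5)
Your proposal is correct and follows essentially the same route as the paper: the paper's proof is precisely the combination of Corollary \ref{p norm}, Corollary \ref{p norm grassmannian}, closedness of $\CN$ and $\GN$ in $\SSF$, and Remark \ref{complete Finsler}. The only difference is that you spell out the closedness argument (via $\|\cdot\|\leq\|\cdot\|_{\SSF}$, the fixed initial space $S$, and rank stability of idempotents), which the paper simply asserts; your verification of it is sound.
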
 
\begin{proof}
It follows from  the fact that both manifolds are closed in $\SSF$, Corollary \ref{p norm grassmannian}, Corollary \ref{p norm} and    Remark \ref{complete Finsler}. 
\end{proof}

As a special case of the above corollary, a bit more can be stated when one considers the ideal of Hilbert-Schmidt operators of $\BH$. In fact, a Riemannian metric on $\CN$ is at hand: for  $XV, YV \in (T\CN)_V$,
\[ \PI{XV}{YV}_V := \re \, \Tr( XV (YV)^{\ast}), 
\]
where $\Tr$ is the usual trace and the  adjoint  is taken with respect to the $\Hb^{1}$ inner product. In a similar fashion, we can define a  Riemannian metric on the Grassmann manifold:  given $XP-PY, YP-PY \in (T\GN)_P$,
\[ 
\PI{XP- PX}{YP-PY}_P := \re \, \Tr( (XP-PX) (YP-PY)^{\ast}) .  
\]

\begin{corollary}
$\CN$ and $\GN$ are   complete analytic Hilbert-Riemann manifolds. 
\end{corollary}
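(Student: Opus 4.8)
The plan is to upgrade the three preceding corollaries from the symmetrically-normed ideal $\mathfrak{S}$ to the specific case $\mathfrak{S} = \mathfrak{S}_2$, the Hilbert-Schmidt ideal of $\BH$, and then observe that the induced Finsler norm is in fact Hilbertian. Concretely, first I would recall from Corollary \ref{p norm} and Corollary \ref{p norm grassmannian} that $\CN$ and $\GN$ are real analytic submanifolds of $\mathfrak{S}_2$, and that they are closed therein; hence by Remark \ref{complete Finsler} they are complete Finsler manifolds for the norm inherited from $\mathfrak{S}_2$. The point now is that $\mathfrak{S}_2$ is itself a Hilbert space under the inner product $\PI{X}{Y} = \re\,\Tr(XY^{\ast})$ (the adjoint taken with respect to the $\Hb^{1}$ inner product), and the norm appearing in the previous corollary, $\|\cdot\|_{\mathfrak{S}_2}$, is exactly the norm derived from this inner product. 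Restricting $\PI{\cdot}{\cdot}$ to the tangent spaces $(T\CN)_V = \{XV : X \in \uF\}$ and $(T\GN)_P = \{XP - PX : X \in \uF\}$, which are closed subspaces of $\mathfrak{S}_2$, gives a smoothly varying family of inner products whose associated norms coincide with the Finsler norms already shown to be complete.

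The key steps, in order, are: (1) verify that $\mathfrak{S}_2$ is a (real) Hilbert space with inner product $\re\,\Tr(XY^{\ast})$ inducing the ideal norm $\|\cdot\|_{\mathfrak{S}_2}$ — this is the standard fact that Hilbert-Schmidt operators form a Hilbert space, with the only wrinkle being that we take the adjoint with respect to the $\Hb^{1}$ inner product rather than $L^2$, but since $\BH$ is the algebra of bounded operators on the Hilbert space $\Hb^{1}$ this is the natural choice and nothing changes; (2) observe that the formulas $\PI{XV}{YV}_V := \re\,\Tr(XV(YV)^{\ast})$ and $\PI{XP-PX}{YP-PY}_P := \re\,\Tr((XP-PX)(YP-PY)^{\ast})$ are simply the restriction of this ambient inner product to the tangent subspaces, hence are genuine inner products and the associated norm is $\|\cdot\|_{\mathfrak{S}_2}$; (3) invoke that a closed $C^1$ (indeed real analytic) submanifold of a Hilbert space, equipped with the induced Riemannian metric, is a complete Riemannian manifold — again Remark \ref{complete Finsler} and Theorem 3.6 of \cite{palais66}, now with $\Xc = \mathfrak{S}_2$ a Hilbert space; (4) remark that the submanifold structure is real analytic by Corollary \ref{p norm} and Corollary \ref{p norm grassmannian}, and the Riemannian metric depends real-analytically on the base point because it is the restriction of a fixed bilinear form on $\mathfrak{S}_2$, so in fact $\CN$ and $\GN$ are complete analytic Hilbert-Riemann manifolds.

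The main obstacle — really the only non-bookkeeping point — is making sure the passage from ``Finsler with the $\mathfrak{S}_2$-norm'' to ``Riemann with the $\mathfrak{S}_2$-inner product'' is rigorous: one must check that the inner product indeed restricts to each tangent space (which is automatic, the tangent spaces being linear subspaces of $\mathfrak{S}_2$), that it varies analytically (automatic, being the restriction of one global bilinear form), and that completeness of the metric space is not lost (it is not, since the Finsler distance for the $\mathfrak{S}_2$-norm and the Riemannian distance for the $\mathfrak{S}_2$-inner product are literally the same function — the length functional $L(\gamma) = \int_0^1 \|\dot\gamma(t)\|_{\gamma(t)}\,dt$ is identical in both formulations). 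I would therefore write the proof as: ``By Corollary \ref{p norm} and Corollary \ref{p norm grassmannian} with $\SSF = \SSF_2$, and Remark \ref{complete Finsler}, both manifolds are complete Finsler manifolds whose Finsler norm is the Hilbert-Schmidt norm. This norm arises from the inner product $\re\,\Tr(XY^{\ast})$ on $\SSF_2$; its restriction to the tangent spaces yields the stated Riemannian metrics, which are real analytic since they are restrictions of a fixed bilinear form and the submanifold structures are real analytic. The associated length functionals, and hence the associated distances, coincide with the Finsler ones, so the Riemannian distance is complete.'' That sequence of observations is essentially the whole argument.
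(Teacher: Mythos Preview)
Your proposal is correct and matches the paper's approach exactly: the paper states this corollary without proof, treating it as the immediate specialization of the preceding Finsler corollary to $\SSF = \SSF_2$, and your argument supplies precisely the details the paper leaves implicit. The only thing to add is that the paper itself gives no written proof here, so your expanded version is, if anything, more complete than what appears in the text.
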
 

\begin{remark}
Let $\mathbb{S}^{M}$ denote the unit sphere in $\RR^{M+1}$.  In the multi-configurative Hartree-Fock type equations \cite{lewin04}, the energy functional is now defined in the variational spaces  
$  \mathcal{C}_{K,N}:= \mathbb{S}^{\binom{K}{N}} \times \mathcal{C}_K$, where  $N<K$. Thus $\mathcal{C}_{K,N}$ is also a  complete analytic Hilbert-Riemann manifold, being the product of a sphere and  the Stiefel manifold $\mathcal{C}_K$.
\end{remark}

\end{document}